\documentclass[12pt]{article}
\usepackage{amsmath}
\usepackage{graphicx,psfrag,epsf}
\usepackage{enumerate}
\usepackage{psfrag,epsf}
\usepackage{url} % not crucial - just used below for the URL
\usepackage{amsmath,amssymb,amsfonts,amsthm,mathtools,mathrsfs,booktabs}

\usepackage{tikz}
\usepackage{pgfplots}
\usepgfplotslibrary{fillbetween}
\pgfplotsset{compat=1.16}

\usepackage{bm,bbm}
\usepackage{color}
\usepackage{subfigure}
\usepackage{algorithm,algpseudocode}
\usepackage[symbol]{footmisc}
\usepackage{scalefnt}
\usepackage{authblk} % author and affiliations
\usepackage{multirow,centernot}
\usepackage[colorlinks=true, citecolor=blue, urlcolor=blue]{hyperref}
\usepackage{natbib}
\usepackage{dsfont}
\usepackage[title]{appendix}
\usepackage{newtxtext,newtxmath} % Times Roman fonts (text and math)
\usepackage{mhchem} % for \ce macro

\input cyracc.def

\usepackage[left=1in,top=1.1in,right=0.5in,bottom=1in]{geometry}

\theoremstyle{definition}

\newtheorem{theorem}{Theorem}[section]

\newtheorem{corollary}[theorem]{Corollary}

\newtheorem{proposition}[theorem]{Proposition}
\newtheorem{remark}[theorem]{Remark}
\makeatletter
\def\@seccntformat#1{\@ifundefined{#1@cntformat}%
	{\csname the#1\endcsname\quad}%      default
	{\csname #1@cntformat\endcsname}%    enable individual control
}
\makeatother

\newif\ifShowComments
\ShowCommentstrue
\def\strutdepth{\dp\strutbox}
\def\druk#1{\strut\vadjust{\kern-\strutdepth
        {\vtop to \strutdepth{%
                \baselineskip\strutdepth\vss
                        \llap{\hbox{#1}\quad}\null}}}}

\title{\bf
Bias analysis of a linear order-statistic 
 inequality \\
 index estimator: Unbiasedness under gamma populations
}

\author{
\text{Roberto Vila}$^{1}$\thanks{Corresponding author: Roberto Vila, email: {rovig161@gmail.com}
}
\,\, and
\text{Helton Saulo}$^{1,2}$ 
\\
{\small $^{1}$ Department of Statistics, University of Brasilia, Brasilia, Brazil}\\
{\small $^{2}$ Department of Economics, Federal University of Pelotas, Pelotas, Brazil}\\
}
\begin{document}
	\maketitle 	
	\begin{abstract}
This paper studies a class of rank-based inequality measures built from linear combinations of expected order statistics. The proposed framework unifies several well-known indices, including the classical Gini coefficient, the $m$th Gini index, the extended $m$th Gini index and particular cases of the $S$-Gini index, and also connects to spectral inequality measures through an integral representation. We investigate the finite-sample behavior of a natural U-statistic-type estimator that averages weighted order-statistic contrasts over all subsamples of fixed size and normalizes by the sample mean. A general bias decomposition is derived in terms of components that isolate the effect of random normalization on each rank level, yielding analytical expressions that can be evaluated under broad non-negative distributions via Laplace-transform methods. Under mild moment conditions, the estimator is shown to be asymptotically unbiased. Moreover, we prove exact unbiasedness under gamma populations for any sample size, extending earlier unbiasedness results for Gini-type estimators. A Monte Carlo study is performed to numerically check that the theoretical {unbiasedness} under gamma populations. {Finally, a data set on GDP per capita across $34$ countries in the Americas is analyzes to illustrate the proposed methodology.}
	\end{abstract}
	\smallskip
	\noindent
	{\small {\bfseries Keywords.} Linear order-statistic inequality index, linear order-statistic inequality index estimator, $m$th Gini index,   extended $m$th Gini index, $S$-Gini index, unbiased estimator.}
	\\
	{\small{\bfseries Mathematics Subject Classification (2010).} {MSC 60E05 $\cdot$ MSC 62Exx $\cdot$ MSC 62Fxx.}}
%	
%%	\tableofcontents

\section{Introduction}

Quantifying economic inequality from sample data is a central problem in applied economics and statistics. The classical Gini coefficient is arguably the most widely used rank-based inequality measure, but it is well known to exhibit non-negligible finite-sample bias, especially for small and moderate sample sizes and for highly skewed populations; see, e.g., \cite{Deltas2003,Baydil2025} and references therein. This motivates the development and analysis of alternative indices and estimators with improved small-sample properties, while retaining the desirable invariance and rank-based interpretation that make Gini-type measures attractive in practice.

A prominent extension of the Gini coefficient is the class of generalized and extended Gini indices, including the $m$th Gini index, $S$-Gini index and its extensions, which can be expressed as normalized contrasts of expected order statistics \citep{Deltas2003,Gini1936,Gavilan-Ruiz2024,Vila2025,Vila2026,Yitzhaki-Schechtman2013}. Such measures are particularly appealing because they admit transparent interpretations in terms of dispersion between extreme or intermediate ranks and connect naturally to the broader class of spectral (rank-dependent) inequality measures \citep{Cowell2011,Chakravarty1990}. At the same time, many of these indices are typically estimated through U-statistic-type constructions that average rank contrasts over all subsamples of a given size $m$, often combined with normalization by the sample mean. Despite their conceptual appeal and practical relevance, the finite-sample bias behavior of these estimators has remained largely unexplored beyond specific cases.

The present paper contributes to this literature by developing a unified bias analysis for a broad family of order-statistic-based inequality indices and their canonical estimators. Specifically, for a non-negative random variable $X$ with finite, strictly positive mean $\mu=\mathbb{E}[X]$, and for $m\geqslant 2$, we consider the linear order-statistic inequality index
\begin{equation}\label{eq:rsrsf}
	I_m
	=
	\frac{1}{m\mu}\sum_{k=1}^m a_k\,\mathbb{E}[X_{k:m}],
	\quad
	\sum_{k=1}^m a_k=0,
	\quad a_1 \leqslant \cdots \leqslant a_m,
\end{equation}
which includes as special cases the classical Gini coefficient ($m=2$), the $m$th Gini index, the extended $m$th Gini index, and particular cases of the $S$-Gini index (Remark \ref{rem-1}). {The motivation of the proposed framework is mathematical. Note that all of the listed indices can be written as \eqref{eq:rsrsf}. This representation connects directly to the class of linear (spectral) inequality measures, which admit an integral representation $I_m = \mu^{-1}\int_0^1 w_m(u)Q_X(u)\,\mathrm{d}u$, where 	$
	w_m(u)=(1/m)\sum_{k=1}^m a_k f_{U_{k:m}}(u)
	$, \(Q_X\) is the quantile function of \(X\), and \(U_{k:m}\sim\mathrm{Beta}(k,m-k+1)\) is the \(k\)-th order statistic from a size-\(m\) sample of \(U(0,1)\), and are axiomatically characterized by Lorenz-consistency and the Pigou--Dalton transfer principle \citep{Cowell2011,Chakravarty1990}. The $S$-Gini, despite being derived from social welfare functions, belongs to this spectral class for specific polynomial weight functions, as demonstrated by \citet{Yitzhaki-Schechtman2013}. The economic interpretation is that all members of the family summarize rank-based dispersion in a single number that is invariant to scale, non-negative, and zero only under perfect equality.}

Given an independent and identically distributed (i.i.d.) sample $X_1,\ldots,X_n$ with $n\geqslant m$, we study the estimator $\widehat I_m$ defined in \eqref{estimator}, which averages the weighted order-statistic contrasts over all $m$-subsamples and normalizes by $\overline X$. Our first main result derives a general expression for the finite-sample bias $\mathrm{Bias}(\widehat I_m,I_m)$ (Corollary~\ref{def-bias}). The bias can be decomposed into a linear combination of factors
\[
\Delta_{n,r}
=
\mathbb{E}\!\left[\frac{X_{r:r}}{\overline X}\right]
-
\frac{\mathbb{E}[X_{r:r}]}{\mu},
\quad r=1,\ldots,m,
\]
which separates the contribution of each rank level in a manner that is amenable to analytic and numerical investigation. We further provide a characterization of $\Delta_{n,r}$ via Laplace-transform methods (Proposition \ref{prop-main}), yielding a route to compute (or approximate) the bias under general non-negative distributions.

Our second main result establishes asymptotic unbiasedness: under mild 
{moment and lower-tail} conditions, $\mathrm{Bias}(\widehat I_m,I_m)\to 0$ as $n\to\infty$ (Proposition~\ref{Asymptotic unbiasedness}). While asymptotic unbiasedness is reassuring, it does not address the small-sample bias that often drives empirical discrepancies. The key contribution of the paper is therefore our third main result, which identifies a distributional setting where the estimator is exactly unbiased for all sample sizes. Specifically, we prove that if $X\sim\mathrm{Gamma}(\alpha,\lambda)$, then $\Delta_{n,r}=0$ for all $r\leqslant n$, implying (Corollary~\ref{def-bias})
\begin{align*}
	\mathrm{Bias}(\widehat I_m,I_m)=0
	\quad\text{for all } n\geqslant m,
\end{align*}
thereby extending earlier unbiasedness results for the Gini coefficient and its $m$th and extended variants \citep{Deltas2003,Baydil2025,Vila2025,Vila2026}. The proof relies on the Dirichlet property of normalized gamma samples and the homogeneity of the maximum functional, which together yield the identity
$\mathbb{E}[X_{r:r}] = \mu\,\mathbb{E}[X_{r:r}/\overline X]$.

{The rest of the paper is organized as follows. Section~\ref{sec:02} defines the index $I_m$ and establishes its key properties and spectral integral representation. Section~\ref{Basic properties} presents several properties of the linear order-statistic inequality index. Section~\ref{general_bias} derives the general bias formula for $\widehat{I}_m$ in terms of the $\Delta_{n,r}$ factors and provides their Laplace-transform characterization. Section~\ref{Asymptotic unbiasedness} proves asymptotic unbiasedness under moment and lower-tail conditions. Section~\ref{Unbiasedness_gamma} establishes exact unbiasedness under gamma populations. Section~\ref{simulation study} presents Monte Carlo evidence comparing the proposed estimator with a plug-in parametric maximum likelihood estimator (MLE)  across gamma and non-gamma populations, together with a numerical validation of the bias decomposition via the $\Delta_{n,r}$ components. Section~\ref{empirical application} illustrates an empirical application to GDP per capita data in the Americas. Finally, Section~\ref{concluding remarks} provides some concluding remarks.}

\section{A linear order-statistic inequality index}\label{sec:02}

Let $X$ be a non-negative random variable with finite mean 
$\mu=\mathbb{E}[X]>0$. 
For an i.i.d. random sample of size $m\geqslant 2$, denote the order statistics by
$
X_{1:m} \leqslant \cdots \leqslant X_{m:m}.
$

Define the population index
\begin{align}\label{def-index}
	I_m
	\equiv
	I_m(X)
	=
	{1\over m\mu}
	{\displaystyle \sum_{k=1}^m a_k \, \mathbb{E}[X_{k:m}]},
\end{align}
where $(a_1,\dots,a_m)$ are fixed real coefficients such that
\begin{align}\label{conditions-an}
	\begin{aligned}
		\sum_{k=1}^m a_k = 0,
		\quad 
		a_1 \leqslant \cdots \leqslant a_m.
	\end{aligned}
\end{align}

The index $I_m$ measures a weighted contrast of expected rank positions relative to the population mean. 

If the distribution of $X$ is close to equality (i.e., $X_{k:m}\approx \mu$ for all $k$), then the index reflects low inequality (depending on the structure of the weights $a_k$). Conversely, large gaps between lower and upper order statistics increase the magnitude of the index, capturing stronger dispersion across ranks.

Therefore, $I_m$ can be viewed as a finite-dimensional, rank-based summary of inequality determined by the weighting scheme $(a_1,\dots,a_m)$.

Table \ref{Table-1} summarizes some main special cases of the linear order-statistic inequality index \eqref{def-index}.
\begin{table}[h!]
	\centering
	\begin{tabular}{p{3.8cm} p{4.1cm} p{3.0cm} p{5.0cm}}
		\toprule
		\textbf{Index} 
		& \textbf{Weights $a_k$} 
		& \textbf{Formula} 
		& \textbf{Interpretation} \\
		\midrule
		
		Classical Gini index $G$ \citep{Deltas2003,Gini1936}
		($m=2$) 
		& $a_1=-1,\; a_2=1$ 
		& $\displaystyle 
		%		\frac{\mathbb{E}[X_{2:2}-X_{1:2}]}{2\mu}
		%		=
		\frac{\mathbb{E}|X_1-X_2|}{2\mu}$ 
		& Mean absolute pairwise difference. \\[0.2cm]
		
		$m$th Gini index $IG_m$ 	
		\citep{Gavilan-Ruiz2024,Vila2025}
		($m\geqslant 2$) 
		& $a_1=-1,\; a_m=1,$ 
		
		$a_k=0$ for $2\leqslant k\leqslant m-1$ 
		& $\displaystyle 
		\frac{\mathbb{E}[X_{m:m}-X_{1:m}]}{m\mu}$ 
		& Dispersion between sample minimum and maximum. \\[0.2cm]
		
		Extended $m$th Gini 
		${IG}_m(j,k)$ \citep{Vila2026} 		
		($m\geqslant 2$) 
		& $a_j=-1,\; a_k=1,$ 
		
		$a_i=0$ for $i\neq j,k$ ,
		
		$1\leqslant j<k\leqslant m$
		& $\displaystyle 
		\frac{\mathbb{E}[X_{k:m}-X_{j:m}]}{m\mu}$ 
		& Difference between two arbitrary order statistics. \\[0.2cm]
		
		General linear 
		index $I_m$ 
		($m\geqslant 2$) 
		& Weights with 
		
		$\sum_{k=1}^m a_k=0$ and 
		
		$a_1 \leqslant \cdots \leqslant a_m$
		& $\displaystyle 
		\frac{1}{m\mu}
		\sum_{k=1}^m a_k\,\mathbb{E}[X_{k:m}]$
		& Most general linear order-statistic inequality index. \\
		%[0.3cm]
		%		
		%		 $R_\nu$ 
		%		
		%		($\nu>1$)  \cite{Yitzhaki-Schechtman2013}
		%		& $a_k=
		%		\nu
		%		\left[
		%		1
		%		-
		%		\frac{\binom{m-k+\nu-1}{\,m-k\,}}
		%		{\binom{m+\nu-1}{\,m\,}}
		%		\right],
		%		$
		%		
		%		
		%		& $1-\frac{\nu}{\mu}\,
		%		\mathbb{E}\!\left[X(1-F(X))^{\nu-1}\right]$ 
		%		& Gini with tunable tail sensitivity. \\
		
		\bottomrule
	\end{tabular}
	\caption{Special cases of the linear order-statistic index $I_m$.}
	\label{Table-1}
\end{table}

\begin{remark}\label{rem-1}
	Condition~\eqref{conditions-an} is required only for the functional $I_m$
	to be interpreted as an inequality index. It is not necessary for the validity
	of the main results and is used exclusively in Section~\ref{Basic properties}
	to derive lower and upper bounds for $I_m$. When this restriction is relaxed,
	several classical inequality measures are recovered as special cases of the
	general index $I_m$.
	
	In particular, by choosing
	\[
	a_k=\frac{1}{m}-b_k,\quad 
	b_1=1,\quad b_k=0\ \text{for }2\leqslant k\leqslant m,
	\]
	and
	\[
	a_k=b_k-\frac{1}{m},\quad 
	b_m=1,\quad b_k=0\ \text{for }1\leqslant k\leqslant m-1,
	\]
	we obtain, respectively, the extended lower and upper Gini indices introduced
	by \cite{Vila2025a}:
	\begin{align*}
		{}_iIG_{m}
		&=\frac{\mathbb{E}[X_{i}-X_{1:m}]}{m\mu},
		\quad 
		{}^{i}IG_{m}
		=\frac{\mathbb{E}[X_{m:m}-X_{i}]}{m\mu},
		\quad
		1\leqslant i\leqslant m.
	\end{align*}
	
	Moreover, by setting $a_1=1-m$, $m=\nu$, and $a_k=1$ for $k\geqslant 2$,
	the index $I_m$ yields particular cases of the $S$-Gini index $R_\nu$ $(\nu>1)$ (Gini coefficient with tunable tail sensitivity)
	\citep{Yitzhaki-Schechtman2013}.
\end{remark}

\section{Basic properties}\label{Basic properties}

In this section, we present several properties of the linear order-statistic inequality index defined in \eqref{def-index}.
\begin{itemize}
	\item {\bf Ratio-scale} invariance.
	For any $c>0$,
	$
	I_m(cX)=I_m(X).
	$
	
	\item {\bf Lack of translation invariance.}
	For any $c>0$, 
	$
	I_m(X+c)={\mu\over\mu+c} \, I_m(X).
	$
	
	\item {\bf Vanishing under equality.}
	If $X=c$ {(with $c>0$)} almost surely, then
	$
	I_m(X)=0.
	$
	
	\item {\bf Non-negativity.} Since
	\[
	\mathbb{E}[X_{1:m}] \leqslant \cdots \leqslant \mathbb{E}[X_{m:m}],
	\quad 
	a_1 \leqslant \cdots \leqslant a_m,
	\]
	Chebyshev's rearrangement inequality yields
	\[
	\frac{\displaystyle \sum_{k=1}^m a_k \, \mathbb{E}[X_{k:m}]}{m}
	\;\geqslant\;
	\frac{\displaystyle \sum_{k=1}^m a_k}{m}\,
	\frac{\displaystyle \sum_{k=1}^m \mathbb{E}[X_{k:m}]}{m}
	= 0,
	\]
	because $\sum_{k=1}^m \mathbb{E}[X_{k:m}]=m\mu$ and $\sum_{k=1}^m a_k=0$.
	Hence, the index satisfies $I_m \geqslant 0$.
	
	\item {\bf Upper bound.} Since $a_1 \leqslant \cdots \leqslant a_m$, we have
	\begin{align*}
		\frac{\displaystyle \sum_{k=1}^m a_k \, \mathbb{E}[X_{k:m}]}
		{m}
		\leqslant
		\frac{\displaystyle a_m\sum_{k=1}^m \, \mathbb{E}[X_{k:m}]}
		{m}
		=
		a_m\mu,
	\end{align*}
	because $\sum_{k=1}^m \mathbb{E}[X_{k:m}]=m\mu$. Therefore,
	$I_m\leqslant a_m$.
	
	\item {\bf Integral representation.}
	Using the representation
	\[
	\mathbb{E}[X_{k:m}]
	=
	\int_0^1 Q_X(u) f_{U_{k:m}}(u) {\rm d}u,
	\]
	where $Q_X$ is the quantile function of $X$ and $U_{k:m}\sim\text{Beta}(k,m-k+1)$ is the 
	$k$-th order statistic of a sample of size $m$ from $U(0,1)$, we obtain
	\[
	I_m
	=
	\frac{1}{\mu}
	\int_0^1 w_m(u)\,Q_X(u) {\rm d}u,
	\]
	with weight function
	$
	w_m(u)=(1/m)\sum_{k=1}^m a_k f_{U_{k:m}}(u).
	$
	
	Thus $I_m$ belongs to the class of linear (spectral) inequality
	measures \citep{Cowell2011,Chakravarty1990}.
	
	\item {\bf Covariance representation.}
	Let $F$ denote the cumulative distribution  function (CDF) of $X$.
	Since 
	\begin{align}\label{id-bin}
		\binom{m-1}{k-1}\mathbb{E}[F^{k-1}(X)[1-F(X)]^{m-k}]={1\over m}
		\quad 
		\text{and}
		\quad 
		\binom{m-1}{k-1}={k\over m} \,\binom{m}{k},
	\end{align}
	we have
	\begin{multline}\label{def-cov}
		{1\over\mu} 
		\sum_{k=1}^{m}a_k \,
		{\rm Cov}\left(X,\binom{m-1}{k-1}F^{k-1}(X)[1-F(X)]^{m-k}\right)
		\\[0,2cm]
		=
		{1\over m \mu} 
		\sum_{k=1}^{m}a_k
		\mathbb{E}\left[
		Q_X(U)
		\left\{
		k
		\binom{m}{k}U^{k-1}(1-U)^{m-k}
		\right\}
		\right],
	\end{multline}
	where $U\sim U(0,1)$ and $Q_X$ is the quantile function of $X$.
	By combining the well-known identity \cite[see Item (1) in][]{Kleiber2002}:
	\begin{align*}
		\mathbb{E}[X_{k:m}]
		=
		k
		\binom{m}{k}
		\mathbb{E}\left[Q_X(U)U^{k-1}(1-U)^{m-k}\right], 
	\end{align*}
	with the definition in \eqref{def-index} of $I_m$, from \eqref{def-cov} we have 
	\begin{align}\label{prop-using}
		I_m
		=	
		{1\over\mu} 
		\sum_{k=1}^{m}a_k \,
		{\rm Cov}\left(X,\binom{m-1}{k-1}F^{k-1}(X)[1-F(X)]^{m-k}\right).
	\end{align}
	
	\item {\bf Lorenz-type inequality representation.} The Lorenz curve $L$ for $X$ is defined by 
	$
	L(p)=(1/\mu){\int_0^p Q_X(t){\rm d}t},  \text{for any} \ 0\leqslant p\leqslant 1.
	$
	Since $L'(p)=Q_X(p)/\mu$, by \eqref{prop-using} and identity in \eqref{def-cov}, we have
	\begin{align*}
		I_m
		=
		{1\over m} 
		\sum_{k=1}^{m} a_k \,
		\mathbb{E}\left[
		\{L'(U)-1\}
		\left\{
		k
		\binom{m}{k}U^{k-1}(1-U)^{m-k}
		\right\}
		\right],
	\end{align*}
	where $U\sim U(0,1)$. By integration by parts, the above identity takes the form:
	\begin{align*}
		I_m
		=
		{1\over m} 
		\sum_{k=1}^{m} a_k \,
		\mathbb{E}\Bigg[
		\{U-L(U)\}
		\Bigg\{
		k\binom{m}{k}U^{k-2}(1-U)^{m-k-1}[k-1-(m-1)U]
		\Bigg\}
		\Bigg].
	\end{align*}
	Using the identity \eqref{id-bin} and the binomial theorem applied to $(1-U)^{m-k-1}$, we derive
	\begin{align*}
		I_m
		=
		\sum_{k=1}^{m} a_k \,
		\binom{m-1}{k-1}
		\sum_{r=0}^{m-k-1}
		\binom{m-k-1}{r}
		(-1)^{m-k-1-r} 
		\left[
		{k-1\over m-1-r} \,
		D_{m-2-r}
		-
		{m-1\over m-r} \,
		D_{m-1-r}
		\right],
	\end{align*}
	where 
	$
	D_n\equiv D_n(X)=(n+1)\mathbb{E}[\{U-L(U)\}U^{n-1}], 
	\ U\sim U(0,1), \ n\geqslant 1,
	$
	is the Lorenz measure of inequality for $X$ introduced in \cite{Aaberge2000}.

	\item {\bf Structural relationship.}
	$
	\text{Gini (}m=2\text{)}
	\subset
	\text{$m$th Gini}
	\subset
	\text{Extended $m$th Gini}
	\subset
	I_m
	\subset
	\text{Spectral inequality measures}.
	$
\end{itemize}

\begin{proposition}\label{ext-gini-index-0}
	For any  $m\geqslant 2$, the linear order-statistic inequality index \eqref{def-index} can be computed as
	\begin{align*}
		I_m
		=
		{1\over m} 
		\displaystyle
		\sum_{k=1}^m a_k 
		\sum_{r=k}^{m}
		\binom{m}{r}
		(-1)^{r-k}\binom{r-1}{k-1}
		\dfrac{ \displaystyle
			\mathbb{E}[X_{r:r}]
		}
		{\displaystyle
			\mu
		}.
	\end{align*}
\end{proposition}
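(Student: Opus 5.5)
The plan is to express each expected order statistic $\mathbb{E}[X_{k:m}]$ as a linear combination of expected sample maxima $\mathbb{E}[X_{r:r}]$, $r=k,\dots,m$. Once that is done, the proposition follows by substituting into the definition \eqref{def-index} of $I_m$ and dividing by $m\mu$. The identity to establish is
\[
\mathbb{E}[X_{k:m}]=\sum_{r=k}^{m}\binom{m}{r}(-1)^{r-k}\binom{r-1}{k-1}\,\mathbb{E}[X_{r:r}],\qquad 1\leqslant k\leqslant m .
\]
Since only finiteness of $\mu$ is assumed, I will work through the quantile representation
\[
\mathbb{E}[X_{k:m}]=k\binom{m}{k}\mathbb{E}\bigl[Q_X(U)U^{k-1}(1-U)^{m-k}\bigr]
\]
quoted from \cite{Kleiber2002} in the covariance representation item. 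This reduces everything to identities between polynomials in $U$. All the expectations involved are finite, because $\mathbb{E}[X_{r:r}]\leqslant r\mu$.

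The first step is to expand $(1-U)^{m-k}$ by the binomial theorem:
\[
U^{k-1}(1-U)^{m-k}=\sum_{j=0}^{m-k}\binom{m-k}{j}(-1)^{j}U^{k-1+j}.
\]
Setting $r=k+j$, this becomes a sum over $r=k,\dots,m$ of $\binom{m-k}{r-k}(-1)^{r-k}U^{r-1}$. The second step is to recognise that
\[
\mathbb{E}[Q_X(U)U^{r-1}]=\frac{1}{r}\,\mathbb{E}[X_{r:r}],
\]
which is the special case $k=m=r$ of the same representation. Substituting gives
\[
\mathbb{E}[X_{k:m}]=\sum_{r=k}^m k\binom{m}{k}\binom{m-k}{r-k}\frac{(-1)^{r-k}}{r}\,\mathbb{E}[X_{r:r}].
\]

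The third step, which is the only place requiring care, is the binomial coefficient identity
\[
\frac{k}{r}\binom{m}{k}\binom{m-k}{r-k}=\binom{m}{r}\binom{r-1}{k-1}.
\]
I would verify it by writing $\binom{m}{k}\binom{m-k}{r-k}=\binom{m}{r}\binom{r}{k}$ (choosing $r$ elements and then $k$ of them) and then using $\frac{k}{r}\binom{r}{k}=\binom{r-1}{k-1}$.

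Finally, I multiply the identity by $a_k$, sum over $k$, and divide by $m\mu$. This yields the stated formula directly from \eqref{def-index}, without reordering the double sum. Note that the constraints \eqref{conditions-an} are not used anywhere, consistent with Remark \ref{rem-1}.
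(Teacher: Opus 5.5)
Your proof is correct, but it takes a different route from the paper's.

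\textbf{The paper's route.} The paper invokes Proposition~10 of Salama and Koch (2020). This is a \emph{pathwise} identity, valid for every realization:
$X_{k:m}=\sum_{r=k}^{m}(-1)^{r-k}\binom{r-1}{k-1}\sum_{1\leqslant t_1<\cdots<t_r\leqslant m}\max\{X_{t_1},\ldots,X_{t_r}\}$.
It then takes expectations, using that each of the $\binom{m}{r}$ subset maxima has the law of $X_{r:r}$.

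\textbf{Your route.} You work only at the level of expectations. The Beta/quantile representation turns $\mathbb{E}[X_{k:m}]$ into $\int_0^1 Q_X(u)f_{U_{k:m}}(u)\,{\rm d}u$. The claim then becomes a change of basis from $u^{k-1}(1-u)^{m-k}$ to the monomials $u^{r-1}$, that is, to the densities $f_{U_{r:r}}(u)=ru^{r-1}$ of the uniform maxima. The coefficients come from the identity $\frac{k}{r}\binom{m}{k}\binom{m-k}{r-k}=\binom{m}{r}\binom{r-1}{k-1}$, which you verify correctly.

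\textbf{What your version buys.} It is self-contained: it relies only on a representation the paper already quotes, not on an external combinatorial lemma. It makes term-by-term integrability obvious, since $0\leqslant Q_X(U)U^{r-1}\leqslant Q_X(U)$. It also shows that the proposition is just the spectral weight $w_m$ rewritten in the basis $\{f_{U_{r:r}}\}$.

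\textbf{What the paper's version buys.} It gives a deterministic identity that needs only exchangeability. More importantly, the same identity is reused in Theorem~\ref{main-theorem} for $X_{k:\mathbf{i}}/\overline{X}$. There the normalization by $\overline{X}$ involves the whole sample, so there is no one-dimensional integral against $Q_X$ to expand. The decomposition into subset maxima still works, because each ratio $\max_{t\in S}X_t/\overline{X}$ is distributed as $X_{r:r}/\overline{X}$. Deriving both $I_m$ and $\mathbb{E}[\widehat{I}_m]$ from one identity is what makes the coefficients match term by term in Corollary~\ref{def-bias}. Your expectation-level argument proves the proposition as stated, but it would not carry over to the estimator without an additional device.
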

\begin{proof}
	From Proposition 10 of \cite{Salama-Koch2020}, it follows that
	\begin{align*}
		X_{k:m}
		=
		\sum_{r=k}^{m}
		(-1)^{r-k}
		\binom{r-1}{k-1}
		\sum_{1\leqslant t_1<\cdots<t_r\leqslant m}
		\max\{X_{t_1},\ldots,X_{t_r}\}.
	\end{align*}
	Taking expectations on both sides of the above identity and using the fact that $X_1,\ldots,X_m$ are i.i.d. completes the proof.
\end{proof}

\section{General formula for the bias
	%Unbiasedness of extended $m$th Gini index estimator
}\label{general_bias}

In this section, we establish in Corollary~\ref{def-bias} a general formula for the bias of the linear order-statistic inequality index estimator $\widehat{I}_{m}$, $m\geqslant 2$, defined by:
\begin{align}\label{estimator}
	\widehat{I}_{m}
	=
		\dfrac{\displaystyle
			\binom{n}{m}^{-1} \,
			\sum_{\substack{{\bf i}=
					(i_1,\ldots,i_m)\in\mathbb{N}^m: \\[0,1cm]
					1\leqslant i_1<\cdots< i_m\leqslant n}} \
			\sum_{k=1}^{m}
			a_k
			X_{k:{\bf i}}
		}{\displaystyle m\overline{X}} \, 
		\mathds{1}_{\{\sum_{i=1}^{n}X_i>0\}}, 
\end{align}
where $\overline{X}=(1/n)\sum_{i=1}^{n}X_i$ is the sample mean,  $X_{1:\mathbf{i}}=\min\{X_{i_1},\ldots,X_{i_m}\}\leqslant\cdots\leqslant X_{m:\mathbf{i}}=\max\{X_{i_1},\ldots,X_{i_m}\}$ are the order statistics of the i.i.d.\ sample $X_{i_1},\ldots,X_{i_m}$, and $\widehat{I}_m$ is based on $n\geqslant m$ i.i.d. observations.

{
\begin{remark}\label{rem-L-stat}
The estimator $\widehat{I}_m$ defined in \eqref{estimator} admits an equivalent representation as a ratio of an L-statistic to the sample mean. Specifically, the numerator of $\widehat{I}_m$ satisfies
\begin{align}\label{L-stat-rep}
	\binom{n}{m}^{-1}
	\!\!\!\!\!
	\sum_{\substack{{\bf i}\in\mathbb{N}^m:\\ 1\leqslant i_1<\cdots<i_m\leqslant n}}
	\!\!\!\!\!
	\frac{1}{m}\sum_{k=1}^{m} a_k\, X_{k:{\bf i}}
	\;=\;
	\frac{1}{n}\sum_{r=1}^{n} X_{r:n}
	\sum_{s=1}^{\min(m,r)} a_s\,
	\frac{\dbinom{r-1}{s-1}\dbinom{n-r}{m-s}}{\dbinom{n-1}{m-1}},
\end{align}
where $X_{1:n}\leqslant\cdots\leqslant X_{n:n}$ denote the order statistics of the full sample of size $n$.
Therefore, $\widehat{I}_m$ can be expressed as
\begin{align}\label{Ihat-L-exact}
	\widehat{I}_m
	=
	\frac{1}{n\,\overline{X}}
	\sum_{r=1}^{n} X_{r:n}
	\sum_{s=1}^{\min(m,r)} a_s\,
	\frac{\dbinom{r-1}{s-1}\dbinom{n-r}{m-s}}{\dbinom{n-1}{m-1}},
\end{align}
which can be computed exactly in $O(n\log n + nm)$ time, without any subsampling or Monte Carlo approximation.
\end{remark}
}

\begin{theorem}\label{main-theorem}
	Let $X_1,\ldots,X_m$ be i.i.d. copies of a non-negative, non-degenerate random variable $X$ with finite, strictly positive mean $\mu$.
	For any $m\geqslant 2$, the following holds:
	\begin{align*}	
		\mathbb{E}\left[\widehat{I}_m\right]
		=
		{1\over m}
		\sum_{k=1}^{m}
		a_k
		\sum_{r=k}^{m}
		\binom{m}{r}
		(-1)^{r-k}
		\binom{r-1}{k-1}
		\mathbb{E}\left[
		\dfrac{X_{r:r}
		}{\overline{X}}
		\right].
	\end{align*}
	%
	%		
	%	\begin{align*}
		%		\mathbb{E}[	\widehat{IG}_m(j,k,p)]
		%		&=
		%		{1\over m}
		%		\sum_{r=j}^{m}
		%		\binom{m}{r}
		%		\Bigg[
		%		(-1)^{r-k}
		%		\binom{r-1}{k-1}
		%		\mathds{1}_{\{r\geqslant k\}}
		%		-
		%		(-1)^{r-j}
		%		\binom{r-1}{j-1}
		%		\Bigg]
		%		\\[0,2cm]
		%		&\times
		%		n
		%		\int_0^\infty
		%		\int_0^\infty 
		%		\left\{
		%		\mathscr{L}_{X^p}^{r}(z)
		%		-
		%		\mathbb{E}^r\left[
		%		\mathds{1}_{\{X\leqslant t^{1/p}\}}
		%		\exp\left(-X^p z\right)
		%		\right]
		%		\right\}
		%		{\rm d}t  \,
		%		\mathscr{L}_{X^p}^{n-r}(z)
		%		{\rm d}z,
		%	\end{align*}
	%	\begin{align*}
		%		IG_m(j,k,p)
		%		=
		%		{1\over m} 
		%			\displaystyle
		%			\sum_{r=j}^{m}
		%			\binom{m}{r}
		%			\left[
		%			(-1)^{r-k}\binom{r-1}{k-1}\mathds{1}_{\{r\geqslant k\}}
		%			-
		%			(-1)^{r-j}\binom{r-1}{j-1}
		%			\right]
		%		\dfrac{\displaystyle
			%			\int_0^\infty
			%			\left[1-F(t^{1/p})^r\right] \mathrm dt
			%		}
		%		{\displaystyle
			%			\int_0^\infty 
			%			\left[1-F(t^{1/p})\right]
			%			{\rm d}t
			%		}.
		%	\end{align*}
	%	where $\mathscr{L}_F(z)=\int_0^\infty \exp(-zx){\rm d}F(x)$ is the Laplace transform associated with distribution $F$, under the assumption that the relevant expectations and improper integrals converge.
\end{theorem}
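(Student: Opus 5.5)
We plan to mirror the proof of Proposition~\ref{ext-gini-index-0}, this time keeping the random normalization by $\overline{X}$ inside the expectation. First fix a multi-index $\mathbf{i}=(i_1,\ldots,i_m)$ with $1\leqslant i_1<\cdots<i_m\leqslant n$. Apply the identity of Proposition 10 in \cite{Salama-Koch2020} to the subsample $X_{i_1},\ldots,X_{i_m}$:
\[
X_{k:\mathbf{i}}=\sum_{r=k}^{m}(-1)^{r-k}\binom{r-1}{k-1}\sum_{\{t_1<\cdots<t_r\}\subset\{i_1,\ldots,i_m\}}\max\{X_{t_1},\ldots,X_{t_r}\}.
\]
Substituting this into the numerator of \eqref{estimator} writes $\widehat I_m$ as a finite linear combination of the terms
\[
\frac{\max\{X_{t_1},\ldots,X_{t_r}\}}{\overline X}\,\mathds{1}_{\{\sum_i X_i>0\}} .
\]
Since $\widehat I_m$ is a finite sum, linearity of expectation lets us take expectations term by term, provided each term is integrable. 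Integrability holds because $0\leqslant \max\{X_{t_1},\ldots,X_{t_r}\}\leqslant \sum_{i=1}^n X_i = n\overline X$, so each ratio is bounded by $n$ on the event $\{\sum_i X_i>0\}$.

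The key step is exchangeability. The vector $(X_1,\ldots,X_n)$ is i.i.d., and $\overline X$ is a symmetric function of it. Hence for any $r$-subset $\{t_1,\ldots,t_r\}$ of $\{1,\ldots,n\}$,
\[
\mathbb{E}\left[\frac{\max\{X_{t_1},\ldots,X_{t_r}\}}{\overline X}\mathds{1}_{\{\sum_i X_i>0\}}\right]=\mathbb{E}\left[\frac{X_{r:r}}{\overline X}\right],
\]
where $X_{r:r}=\max\{X_1,\ldots,X_r\}$ and the indicator convention is absorbed into the notation. This holds because a permutation of the indices maps the subset to $\{1,\ldots,r\}$ while leaving the joint law and $\overline X$ unchanged. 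Consequently, for each fixed $\mathbf{i}$, the inner sum over $r$-subsets of $\{i_1,\ldots,i_m\}$ contributes exactly $\binom{m}{r}\mathbb{E}[X_{r:r}/\overline X]$.

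Finally, the expectation no longer depends on $\mathbf{i}$. The outer average $\binom{n}{m}^{-1}\sum_{\mathbf{i}}$ is therefore trivial, and after dividing by $m$ we obtain the stated formula. The only delicate point is the bookkeeping. We must check that the subset-counting factor is $\binom{m}{r}$, and not a factor involving $n$, because the subsets are taken inside each fixed $\mathbf{i}$. We must also be careful with the event $\{\sum_i X_i=0\}$. On that event the estimator is defined to be $0$, so $X_{r:r}/\overline X$ should be read as multiplied by the indicator. Non-degeneracy of $X$ is not actually needed for the identity; it only guarantees $P(\sum X_i>0)>0$.
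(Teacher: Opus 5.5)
Your proposal is correct and follows essentially the same route as the paper: expand each $X_{k:\mathbf{i}}$ via Proposition~10 of \cite{Salama-Koch2020} into alternating sums of maxima over $r$-subsets of the subsample, take expectations term by term, and use exchangeability (with $\overline{X}$ symmetric) to replace each term by $\mathbb{E}[X_{r:r}/\overline{X}]$. This collects a factor $\binom{m}{r}$ per subsample, so the outer average over the $\binom{n}{m}$ subsamples is trivial. Your integrability bound (each ratio is at most $n$) and your convention on the event $\{\sum_i X_i=0\}$ fill in details the paper leaves implicit, and your bookkeeping (subsets taken inside $\{i_1,\ldots,i_m\}$, with the maximum of each $r$-subset) is cleaner than the paper's own displayed expansion.
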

\begin{proof}
	%[Proof of Theorem \ref{main-theorem}]
	%	{\color{black} Applying the classical result
		%		$\int_{0}^\infty \exp(-w z){\rm d}z={1/ w}$, $w>0$,
		%		and setting $w=\sum_{i=1}^{n}X_i$, we derive}
	%	\begin{multline}\label{exp-1}
		%		\mathbb{E}\left[
		%		\dfrac{\displaystyle
			%			\sum_{\substack{{\bf i}=
					%					(i_1,\ldots,i_m)\in\mathbb{N}^m: \\[0,1cm]
					%					1\leqslant i_1<\cdots< i_m\leqslant n}}
			%			\left(
			%			X_{k:{\bf i}}
			%			-
			%			X_{j:{\bf i}}
			%			\right)
			%		}{\displaystyle \sum_{i=1}^{n}X_i}
		%		\right]
		%		%		=
		%		%		\sum_{\substack{{\bf i}=
				%				%				(i_1,\ldots,i_m)\in\mathbb{N}^m: \\[0,1cm]
				%				%				1\leqslant i_1<\cdots< i_m\leqslant n}}
		%		%		\mathbb{E}\left[X_{k:{\bf i}}\int_0^\infty\exp\left\{-\left(\sum_{i=1}^{n}X_i\right)z\right\}{\rm d}z\right]
		%		%		\\[0,2cm]
		%		%		-
		%		%		\sum_{\substack{{\bf i}=
				%				%				(i_1,\ldots,i_m)\in\mathbb{N}^m: \\[0,1cm]
				%				%				1\leqslant i_1<\cdots< i_m\leqslant n}}
		%		%		\mathbb{E}\left[X_{j:{\bf i}}\int_0^\infty\exp\left\{-\left(\sum_{i=1}^{n}X_i\right)z\right\}{\rm d}z\right]
		%		%		\\[0,2cm]
		%		=
		%		\sum_{\substack{{\bf i}=
				%				(i_1,\ldots,i_m)\in\mathbb{N}^m: \\[0,1cm]
				%				1\leqslant i_1<\cdots< i_m\leqslant n}} \
		%		\int_0^\infty
		%		\mathbb{E}\left[(X_{k:{\bf i}}-X_{j:{\bf i}})
		%		\exp\left\{-\left(\sum_{i=1}^{n}X_i\right)z\right\}
		%		\right]
		%		{\rm d}z,
		%	\end{multline}
	%	where Tonelli's Theorem justifies the interchange of integrals.
	{\color{black} Making use of the identities provided in} \cite[][Proposition 10]{Salama-Koch2020}:
	\begin{align*}
		X_{k:{\bf i}}
		=
		\sum_{r=k}^{m}
		(-1)^{r-k}
		\binom{r-1}{k-1}
		\sum_{\substack{{\bf t}=(t_1,\ldots,t_r)\in\mathbb{N}^r: \\[0,1cm]
				1\leqslant t_1<\cdots<t_r\leqslant m}}
		X_{r:{\bf t}},
	\end{align*}
	we can write
	\begin{align}\label{exp-2}
		\mathbb{E}\left[
		\dfrac{\displaystyle
			\sum_{\substack{{\bf i}=
					(i_1,\ldots,i_m)\in\mathbb{N}^m: \\[0,1cm]
					1\leqslant i_1<\cdots< i_m\leqslant n}} \
			\sum_{k=1}^{m}
			a_k
			X_{k:{\bf i}}
		}{\overline{X}}
		\right]
		=
		\sum_{\substack{{\bf i}=
				(i_1,\ldots,i_m)\in\mathbb{N}^m: \\[0,1cm]
				1\leqslant i_1<\cdots< i_m\leqslant n}}
		\sum_{k=1}^{m}
		a_k
		\sum_{r=k}^{m}
		(-1)^{r-k}
		\binom{r-1}{k-1}
		\sum_{\substack{{\bf t}=(t_1,\ldots,t_r)\in\mathbb{N}^r: \\[0,1cm]
				1\leqslant t_1<\cdots<t_r\leqslant m}} 
		\mathbb{E}\left[
		\dfrac{X_{m:{\bf t}}}{\overline{X}}
		\right].
	\end{align}
	Using the fact that $X_1,\ldots,X_m$ are i.i.d., the identity in \eqref{exp-2} takes the simplified form:
	%, we have
	%	\begin{align*}
		%		X_{m:{\bf t}}
		%		&\exp\left\{-\left(\sum_{i=1}^{r}X_i\right)z\right\}
		%		=
		%		\max\{X_{t_1},\ldots,X_{t_r}\}
		%		\exp\left\{-\left(\sum_{i=1}^{r}X_i\right)z\right\}
		%		\\[0,2cm]
		%		&
		%		\stackrel{d}{=}
		%		\max\{X_{1},\ldots,X_{r}\}
		%		\exp\left\{-\left(\sum_{i=1}^{r}X_i\right)z\right\}
		%		=
		%		X_{r:r}
		%		\exp\left\{-\left(\sum_{i=1}^{r}X_i\right)z\right\}
		%	\end{align*}
	%	and
	%	\begin{align*}
		%		X_{m:{\bf u}}
		%		&\exp\left\{-\left(\sum_{i=1}^{s}X_i\right)z\right\}
		%		=
		%		\max\{X_{u_1},\ldots,X_{u_s}\}
		%		\exp\left\{-\left(\sum_{i=1}^{s}X_i\right)z\right\}
		%		\\[0,2cm]
		%		&
		%		\stackrel{d}{=}
		%		\max\{X_{1},\ldots,X_{s}\}
		%		\exp\left\{-\left(\sum_{i=1}^{s}X_i\right)z\right\}
		%		=
		%		X_{s:s}
		%		\exp\left\{-\left(\sum_{i=1}^{s}X_i\right)z\right\},
		%	\end{align*}
	%	where $\stackrel{d}{=}$ denotes equality in distribution of random variables. Applying the aforementioned identities to \eqref{exp-2} and leveraging independence, the expression simplifies to
	%	{\small
		\begin{align*}
			%		&=	
			%		\binom{n}{m}
			%		\!\!
			%		\sum_{r=k}^{m}
			%		(-1)^{r-k}
			%		\binom{r-1}{k-1} 		\!\!
			%		\binom{m}{r} 		\!\!
			%		\int_0^\infty 		\!\!
			%		\mathbb{E}\left[ 		\!
			%		X_{r:r} 		\!
			%		\exp\left\{		\!-		\!\left(\sum_{i=1}^{r}X_i\right)		\!z		\!\right\} 		\!
			%		\mathbb{E}\left[
			%		\exp\left\{-		\!\left(\sum_{i=r+1}^{n}X_i\right)		\!z		\!\right\}
			%				\!
			%		\right]
			%				\!
			%		\right] 				\!\!
			%		{\rm d}z
			%		\\[0,2cm]
			%		&-
			%		\binom{n}{m}
			%				\!\!
			%		\sum_{s=j}^{m}
			%		(-1)^{s-j}
			%		\binom{s-1}{j-1} 		\!\!
			%		\binom{m}{s} 		\!\!
			%		\int_0^\infty 		\!\!
			%		\mathbb{E}\left[ 		\!
			%		X_{s:s} 		\!
			%		\exp\left\{-\left(\sum_{i=1}^{s}X_i\right)		\!z		\!\right\}
			%				\!
			%		\mathbb{E}
			%		\left[
			%		\exp\left\{		\!-		\!\left(\sum_{i=s+1}^{n}X_i\right)		\!z		\!\right\} 		\!
			%		\right] 		\!
			%		\right] 		\!\!
			%		{\rm d}z
			%		\\[0,2cm]
			%		&=
			\mathbb{E}\left[
			\dfrac{\displaystyle
				\sum_{\substack{{\bf i}=
						(i_1,\ldots,i_m)\in\mathbb{N}^m: \\[0,1cm]
						1\leqslant i_1<\cdots< i_m\leqslant n}}
				\sum_{k=1}^{m}
				a_k
				X_{k:{\bf i}}
			}{\overline{X}}
			\right]
			=
			\binom{n}{m}
			\sum_{k=1}^{m}
			a_k
			\sum_{r=k}^{m}
			\binom{m}{r}
			(-1)^{r-k}
			\binom{r-1}{k-1}
			\mathbb{E}\left[
			\dfrac{X_{r:r}
			}{ \overline{X}}
			\right].
		\end{align*}
		%}
	Finally, the result follows immediately from definition \eqref{estimator} of $\widehat{I}_m$ and the preceding identity.
\end{proof}

By combining Theorem~\ref{main-theorem} and Proposition~\ref{ext-gini-index-0}, we obtain:
\begin{corollary}\label{def-bias}
	For any $m\geqslant 2$, the bias of $\widehat{I}_m$ relative to $I_m$, denoted by ${\rm Bias}(\widehat{I}_m,I_m)$, can be written as
	\begin{align*}
		{\rm Bias}(\widehat{I}_m,I_m)
		=
		{1\over m} 
		\sum_{k=1}^{m}
		a_k
		\sum_{r=k}^{m}
		\binom{m}{r} 
		(-1)^{r-k}
		\binom{r-1}{k-1}
		\Delta_{n,r},
	\end{align*}
	where 
	\begin{align}\label{def-delta}
		\Delta_{n,r}
		\equiv
		\mathbb{E}
		\left[
		\dfrac{X_{r:r}
		}{
			\overline{X}
		}
		\right]
		-
		\dfrac{ 
			\mathbb{E}
			\left[
			X_{r:r}
			\right] 
		}
		{
			\mu
		}.
	\end{align}
\end{corollary}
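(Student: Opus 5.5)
The argument is a direct subtraction of two expansions already established in the paper. By definition, ${\rm Bias}(\widehat I_m,I_m)=\mathbb{E}[\widehat I_m]-I_m$, so I will take the expansion of $\mathbb{E}[\widehat I_m]$ from Theorem~\ref{main-theorem} and the expansion of $I_m$ from Proposition~\ref{ext-gini-index-0}, and subtract them term by term.

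First, I will line up the coefficients. Theorem~\ref{main-theorem} expresses $\mathbb{E}[\widehat I_m]$ as
\[
\frac1m\sum_{k=1}^m a_k\sum_{r=k}^m\binom{m}{r}(-1)^{r-k}\binom{r-1}{k-1}\,\mathbb{E}\!\left[\frac{X_{r:r}}{\overline X}\right].
\]
Proposition~\ref{ext-gini-index-0} expresses $I_m$ with exactly the same double sum and the same coefficients, with $\mathbb{E}[X_{r:r}]/\mu$ in place of $\mathbb{E}[X_{r:r}/\overline X]$. Both sums are finite, with $1\le k\le r\le m$. Hence the difference can be taken inside the sums, and each summand contributes its common coefficient times
\[
\mathbb{E}[X_{r:r}/\overline X]-\mathbb{E}[X_{r:r}]/\mu=\Delta_{n,r},
\]
which is the claimed formula.

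Second, I will check that every quantity involved is finite, since only then is the subtraction legitimate; this is the only step needing care.
\begin{itemize}
\item $\mathbb{E}[X_{r:r}]\le r\mu<\infty$.
\item For $r\le m\le n$, on the event $\{\sum_i X_i>0\}$ we have $X_{r:r}\le \sum_{i=1}^r X_i\le n\overline X$. Therefore $0\le X_{r:r}/\overline X\le n$, and $\mathbb{E}[X_{r:r}/\overline X]$ is finite.
\end{itemize}

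One further point concerns the indicator $\mathds{1}_{\{\sum X_i>0\}}$ in \eqref{estimator}. Theorem~\ref{main-theorem} implicitly interprets $X_{r:r}/\overline X$ as $0$ on the null-sum event, where $X_{r:r}=0$. I will adopt that same convention in $\Delta_{n,r}$, so the two expansions are consistent. With these checks in place, the corollary follows immediately.
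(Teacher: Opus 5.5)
Your proposal is correct and follows the paper's route: the corollary comes from subtracting the expansion of $I_m$ in Proposition~\ref{ext-gini-index-0} from the expansion of $\mathbb{E}[\widehat I_m]$ in Theorem~\ref{main-theorem}, and the two expansions have identical coefficients. Your extra checks are also correct and make explicit integrability details that the paper leaves implicit: the bound $0\le X_{r:r}/\overline X\le n$ on $\{\sum_i X_i>0\}$, and the convention $X_{r:r}/\overline X=0$ on the null-sum event, which matches the indicator in $\widehat I_m$.
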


Under suitable integrability conditions, the next result and Corollary \ref{def-bias} yield (not necessarily closed-form) expressions for the bias over general non-negative-support populations. 
%We strongly believe that, within the class of absolutely continuous distributions supported on $[0,\infty)$ and having finite and positive mean, the gamma distribution is the only one for which the bias vanishes.
\begin{proposition}\label{prop-main}
	For each $n\geqslant r$ the factor $\Delta_{n,r}$ in \eqref{def-delta} can be characterized as
	\begin{align*}
		\Delta_{n,r}
		=
		\int_0^\infty
		\left\{
		\int_0^\infty 
		\left[
		1
		-
		F_z(t)
		\right]
		n
		\mathscr{L}^{n}(z)
		{\rm d}z  \,
		-
		\dfrac{ 
			\left[1-F^r(t)\right]
		}{\mu} 
		\right\}
		{\rm d}t,
	\end{align*}  
	provided that the Laplace transform $\mathscr{L}(z)=\mathbb{E}[\exp(-zX)]$ of $X$ and the associated improper integrals exist. In the above, $F$ is the CDF of $X$ and $F_z$ is the CDF defined as
	\begin{align*}
		F_z(t)
		\equiv
		\dfrac{
			\mathbb{E}^r\left[
			\mathds{1}_{\{X\leqslant t\}}
			\exp\left(-X z\right)
			\right]
		}{\mathscr{L}^{r}(z)}, \quad 
		t>0, \, z>0.
	\end{align*}
\end{proposition}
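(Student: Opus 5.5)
The plan is to compute the two terms of $\Delta_{n,r}$ in \eqref{def-delta} separately and then combine them under one outer integral in $t$. The starting point is the elementary identity $1/w=\int_0^\infty \exp(-wz)\,{\rm d}z$ for $w>0$. With $w=S_n=\sum_{i=1}^n X_i$ it gives $1/\overline X = n\int_0^\infty \exp(-zS_n)\,{\rm d}z$.

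\textbf{The term $\mathbb{E}[X_{r:r}/\overline X]$.} Write $X_{r:r}=\max\{X_1,\ldots,X_r\}$, which is measurable with respect to the first $r$ observations. As in \eqref{estimator}, the ratio is set to zero on $\{S_n=0\}$; there $X_{r:r}=0$, so the integrand $X_{r:r}\exp(-zS_n)$ vanishes pointwise and nothing is lost. Since everything is non-negative, Tonelli's theorem gives
\[
\mathbb{E}\left[\frac{X_{r:r}}{\overline X}\right]
= n\int_0^\infty \mathbb{E}\big[X_{r:r}\,e^{-zS_r}\big]\,\mathscr{L}^{n-r}(z)\,{\rm d}z .
\]
Here independence of $(X_1,\ldots,X_r)$ from $(X_{r+1},\ldots,X_n)$ factors out $\mathbb{E}[e^{-z(S_n-S_r)}]=\mathscr{L}^{n-r}(z)$.

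Next we use the layer-cake representation $X_{r:r}=\int_0^\infty \mathds{1}_{\{X_{r:r}>t\}}\,{\rm d}t$ and apply Tonelli again. The key computation is
\[
\mathbb{E}\big[\mathds{1}_{\{X_{r:r}>t\}}e^{-zS_r}\big]
=\mathbb{E}\big[e^{-zS_r}\big]-\mathbb{E}\big[\mathds{1}_{\{X_{r:r}\leqslant t\}}e^{-zS_r}\big]
=\mathscr{L}^r(z)-\mathbb{E}^r\big[\mathds{1}_{\{X\leqslant t\}}e^{-zX}\big].
\]
This holds because $\{X_{r:r}\leqslant t\}=\bigcap_{i\leqslant r}\{X_i\leqslant t\}$ and the factors are i.i.d. The right-hand side equals $\mathscr{L}^r(z)[1-F_z(t)]$ by the definition of $F_z$. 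Multiplying by $n\,\mathscr{L}^{n-r}(z)$ gives
\[
\mathbb{E}\left[\frac{X_{r:r}}{\overline X}\right]
=\int_0^\infty\int_0^\infty [1-F_z(t)]\,n\,\mathscr{L}^n(z)\,{\rm d}z\,{\rm d}t ,
\]
where the order of integration is exchanged freely because the integrand is non-negative.

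\textbf{The term $\mathbb{E}[X_{r:r}]/\mu$.} The same layer-cake formula gives $\mathbb{E}[X_{r:r}]=\int_0^\infty [1-F^r(t)]\,{\rm d}t$. This is finite, since it is at most $r\mu$.

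Subtracting the two expressions and merging them under a single $\int_0^\infty\cdot\,{\rm d}t$ yields the stated formula. The only genuinely delicate point is this final merge. Subtracting inside one $t$-integral requires that the inner $z$-integral be finite for almost every $t$ and that the resulting difference be integrable in $t$. I would not try to prove this in general; it is exactly the proviso in the statement that ``the associated improper integrals exist.'' Everything before that step uses only non-negativity, Tonelli's theorem and independence, so no further conditions are needed there.
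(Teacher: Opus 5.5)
Your proposal is correct and follows the paper's proof, which in one line cites the same ingredients you spell out: the identity $1/w=\int_0^\infty e^{-wz}\,{\rm d}z$ with $w=\sum_{i=1}^n X_i$, the layer-cake representation of $X_{r:r}$, Tonelli's theorem, and the i.i.d.\ factorization. The final merge you flag as delicate is in fact automatic: the subtracted integrand $[1-F^r(t)]/\mu$ is bounded and integrable, with integral $\mathbb{E}[X_{r:r}]/\mu\leqslant r$, so the combined formula holds whenever $\mathbb{E}[X_{r:r}/\overline{X}]<\infty$, and otherwise both sides equal $+\infty$.
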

\begin{proof}
	The result is an immediate consequence of the identities
	$
	\int_{0}^{\infty} \exp{(-wz)}\,\mathrm{d}z={1}/{w},
	\ w>0,
	$
	applied with $w=\sum_{i=1}^n X_i$, and
	$
	X_{r:r}=\int_{0}^{\infty}\mathds{1}_{\{X_{r:r}>t\}}\,\mathrm{d}t,
	$
	combined with Tonelli's theorem and the i.i.d. assumption on $X_1,\ldots,X_n$.
\end{proof}

\section{Asymptotic unbiasedness}\label{Asymptotic unbiasedness}

In this section, we show that, {under moment and lower-tail conditions}, the bias of the linear order-statistic inequality index estimator \eqref{estimator}, as characterized in Corollary~\ref{def-bias}, vanishes as the sample size $n$ increases.

%%{\color{red}The analytical mechanism behind this result is made transparent by the Laplace %%%%%representation of Proposition~\ref{prop-main}: as noted in Remark~\ref{rem:prop-main-roles}(ii), the %%%measure $n\mathscr{L}^n(z)\,\mathrm{d}z$ concentrates near the origin as $n\to\infty$, driving $%%%\Delta_{n,r} \to 0$. The proof below makes this precise via a uniform integrability argument.}

\begin{proposition}\label{prop-main-1}
	{Fix \(p>1\). Assume that there exists \(\varepsilon>0\) such that
		$
		\mathbb{E}[X^{p(1+\varepsilon)}]<\infty,
		$
		and that there exist constants \(C>0\), \(\gamma>0\), \(t_0>0\) such that
		$
		\sup_{n\geqslant 1}\mathbb{P}(\overline{X} \leqslant t)
		\leqslant C t^{\gamma}, \ t\in(0,t_0],
		$
		with
		$
		\gamma > {p(1+\varepsilon)}/{\varepsilon}.
		$}
	Then, for any \(m\geqslant 2\), we have
$$\lim_{n\to\infty}{\rm Bias}(\widehat{I}_m,I_m)=0.$$
\end{proposition}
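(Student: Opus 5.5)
By Corollary~\ref{def-bias}, the bias is a finite linear combination, with coefficients that do not depend on $n$, of the factors $\Delta_{n,r}$ for $r=1,\ldots,m$. It is therefore enough to show that $\Delta_{n,r}\to 0$ as $n\to\infty$ for each fixed $r\leqslant m$. This amounts to
\[
\mathbb{E}\!\left[\frac{X_{r:r}}{\overline X}\right]\longrightarrow\frac{\mathbb{E}[X_{r:r}]}{\mu}.
\]
Here $X_{r:r}=\max\{X_1,\ldots,X_r\}$ is a fixed random variable that does not move with $n$, while $\overline X\to\mu>0$ almost surely by the strong law of large numbers. Hence $X_{r:r}/\overline X\to X_{r:r}/\mu$ almost surely. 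The whole proof then reduces to establishing uniform integrability of the family $\{X_{r:r}/\overline X\}_{n\geqslant r}$; Vitali's convergence theorem then gives convergence of expectations. On the event $\{\sum_i X_i=0\}$ the indicator makes the ratio zero, so it plays no role.

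To get uniform integrability, I will bound $\sup_n \mathbb{E}[(X_{r:r}/\overline X)^{q}]$ for some $q>1$; my choice is $q=p$. Apply Hölder's inequality with exponents $1+\varepsilon$ and $(1+\varepsilon)/\varepsilon$:
\[
\mathbb{E}\!\left[\frac{X_{r:r}^{p}}{\overline X^{p}}\right]
\leqslant
\bigl(\mathbb{E}[X_{r:r}^{p(1+\varepsilon)}]\bigr)^{1/(1+\varepsilon)}
\bigl(\mathbb{E}[\overline X^{-p(1+\varepsilon)/\varepsilon}]\bigr)^{\varepsilon/(1+\varepsilon)}.
\]
The first factor is finite because $X_{r:r}^{s}\leqslant\sum_{i=1}^r X_i^{s}$, so $\mathbb{E}[X_{r:r}^{s}]\leqslant r\,\mathbb{E}[X^{p(1+\varepsilon)}]<\infty$ with $s=p(1+\varepsilon)$.

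The second factor, a negative moment of $\overline X$ bounded uniformly in $n$, is the main obstacle. With $\beta=p(1+\varepsilon)/\varepsilon$, I write
\[
\mathbb{E}[\overline X^{-\beta}]=\int_0^\infty \mathbb{P}(\overline X^{-\beta}>s)\,\mathrm{d}s=\int_0^\infty \beta t^{-\beta-1}\,\mathbb{P}(\overline X<t)\,\mathrm{d}t,
\]
and split the integral at $t_0$. On $(0,t_0]$ the lower-tail hypothesis gives the integrand bound $C\beta t^{\gamma-\beta-1}$, which is integrable near zero precisely because $\gamma>\beta$; this is exactly the stated condition. On $(t_0,\infty)$ the integrand is at most $\beta t^{-\beta-1}$, contributing at most $t_0^{-\beta}$. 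Both bounds are independent of $n$, so $\sup_n\mathbb{E}[\overline X^{-\beta}]<\infty$. The lower-tail hypothesis also forces $\mathbb{P}(\overline X=0)=0$, so the ratio is well defined almost surely.

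Combining the two factors, $\sup_{n}\mathbb{E}[(X_{r:r}/\overline X)^{p}]<\infty$ with $p>1$, which gives uniform integrability. Vitali's theorem then yields $\Delta_{n,r}\to0$ for every $r\leqslant m$, and Corollary~\ref{def-bias} concludes that $\mathrm{Bias}(\widehat I_m,I_m)\to0$. The only delicate bookkeeping is matching exponents in the Hölder step so that exactly the assumed moment $p(1+\varepsilon)$ and the threshold $\gamma>p(1+\varepsilon)/\varepsilon$ appear.
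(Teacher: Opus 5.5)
Your proof is correct and follows essentially the same route as the paper. You reduce the claim to $\Delta_{n,r}\to 0$ via Corollary~\ref{def-bias}, obtain a uniform $L^p$ bound by H\"older with exponents $1+\varepsilon$ and $(1+\varepsilon)/\varepsilon$ plus the tail-integral bound on $\mathbb{E}[\overline{X}^{-p(1+\varepsilon)/\varepsilon}]$, and pass to the limit using uniform integrability. The differences are only cosmetic: you apply H\"older directly to $X_{r:r}$ (the paper first bounds by $r\,\mathbb{E}[(X_1/\overline{X})^p]$), you use the strong law with Vitali instead of the weak law with convergence in probability, and you correctly write the prefactor as $\beta=p(1+\varepsilon)/\varepsilon$ where the paper has $\gamma$.
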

\begin{proof}
	Using the weak law of large numbers,
\begin{align}\label{as-convergence}
	\overline{X}\stackrel{\text{P}}{\longrightarrow} \mu,
	\quad \text{as} \ n\to\infty,
\end{align}
where $``\stackrel{\text{P}}{\longrightarrow}"$ denotes convergence in probability.

{
We show that \(\{{X_{r:r}}/{\overline{X}}\}_{n\geqslant 1}\) is uniformly integrable. 
Indeed, since \(X_{r:r}^p \leqslant \sum_{i=1}^r X_i^p\), $p>1$, and $X_1,X_2,\ldots$ are i.i.d., we have
\begin{align}\label{ineq-id-0}
	\mathbb{E}\!\left[\left(\frac{X_{r:r}}{\overline{X}}\right)^p\right]
	\leqslant 
	r\,\mathbb{E}\!\left[\left(\frac{X_1}{\overline{X}}\right)^p\right].
\end{align}
By Hölder’s inequality, for any \(\varepsilon>0\),
\begin{equation}\label{ineq-id-1}
	\mathbb{E}\!\left[\left(\frac{X_1}{\overline{X}}\right)^p\right]
	\leqslant
	\left\{\mathbb{E}[X^{p(1+\varepsilon)}]\right\}^{1/(1+\varepsilon)}
	\left\{\mathbb{E}\!\left[\overline{X}^{\, -p(1+\varepsilon)/\varepsilon}\right]\right\}^{\varepsilon/(1+\varepsilon)}.
\end{equation}

Using the identity \(\mathbb{E}[Z^{-s}]=s\int_0^\infty t^{-s-1}\mathbb{P}(Z\leqslant t)\,{\rm d}t\) (valid for \(Z>0\) a.s. and $s>0$), together with the bound
\(\mathbb{P}(\overline{X}\leqslant t)\le C t^\gamma\) for \(t\in(0,t_0]\), we obtain
\[
\mathbb{E}\!\left[\overline{X}^{\, -p(1+\varepsilon)/\varepsilon}\right]
\leqslant
C\gamma\int_0^{t_0} t^{\gamma - p(1+\varepsilon)/\varepsilon -1}\,{\rm d}t
+
\gamma
\int_{t_0}^{\infty} t^{-p(1+\varepsilon)/\varepsilon -1}\,{\rm d}t.
\]
Since \(\gamma > p(1+\varepsilon)/\varepsilon\), both integrals are finite, and therefore
\begin{equation}\label{ineq-id-2}
	\sup_{n\geqslant 1}\mathbb{E}\!\left[\overline{X}^{\, -p(1+\varepsilon)/\varepsilon}\right] < \infty.
\end{equation}

Combining \eqref{ineq-id-0}, \eqref{ineq-id-1} and \eqref{ineq-id-2} and using \(\mathbb{E}[X^{p(1+\varepsilon)}]<\infty\), we conclude that
\[
\sup_{n\geqslant 1}
\mathbb{E}\!\left[\left(\frac{X_{r:r}}{\overline{X}}\right)^p\right] < \infty.
\]
Since \(p>1\), it follows that \(\{{X_{r:r}}/{\overline{X}}\}_{n\geqslant 1}\) is uniformly integrable.
}

		Finally, combining \eqref{as-convergence}, the uniform integrability of \(\{{X_{r:r}/
		}{\overline{X}}\}_{n\geqslant 1}\), and standard convergence theorems (e.g., convergence in probability plus uniform integrability implies convergence in $L^1$), we obtain
	\begin{align*}
		\lim_{n\to\infty}
		\mathbb{E}
		\left[
		\dfrac{X_{r:r}
		}{\overline{X}}
		\right]
		=
		\dfrac{ 
			\mathbb{E}
			\left[
			X_{r:r}
			\right] 
		}
		{
			\mu
		}.
	\end{align*}
	Applying Corollary~\ref{def-bias} completes the proof.
	\end{proof}

{
\begin{remark}
	Let \(X \sim \mathrm{Gamma}(\alpha,\lambda)\) with \(\alpha>0\) and \(\lambda>0\).
	Then the moment condition \(\mathbb{E}[X^{p(1+\varepsilon)}]<\infty\) holds for any \(p>1\) and \(\varepsilon>0\), since the gamma distribution has finite moments of all positive orders.
	Moreover, since \(\overline{X} \sim \mathrm{Gamma}(n\alpha,n\lambda)\), its distribution near zero satisfies
	$
	\mathbb{P}(\overline{X} \leqslant t) \leqslant C t^{n\alpha}, \ t\in(0,t_0],
	$
	for some constants \(C=\sup_{n\geqslant 1}(\lambda n)^{n\alpha}/\Gamma(n\alpha+1)>0\) and \(0<t_0\leqslant 1\). In particular,
	$
	\sup_{n\geqslant1}\mathbb{P}(\overline{X} \leqslant t) \leqslant C t^{\alpha}, \ t\in(0,t_0]$,
	so that the lower-tail condition is satisfied with exponent \(\gamma=\alpha\).
	Therefore, the assumptions of the Proposition \ref{prop-main-1} hold whenever
	$
	\alpha > {p(1+\varepsilon)}/{\varepsilon}.
	$
%	
%	We note that this condition is sufficient but not optimal; sharper arguments allow one to replace it by the weaker requirement \(\alpha > p\).
\end{remark}

}

\section{Unbiasedness in gamma populations}\label{Unbiasedness_gamma}

{
In this section, we establish one of the main results of the paper, namely the unbiasedness of the estimator \(\widehat{I}_m\) for gamma populations. The proof relies on three key ingredients: the independence between the vector of normalized independent gamma variables and their total sum, the homogeneity of the maximum function, and Corollary~\ref{def-bias}.

\begin{theorem}
Let $X_1,\ldots,X_n$ be a random sample from $X\sim\mathrm{Gamma}(\alpha,\lambda)$. Then $\widehat{I}_m$ is an unbiased estimator of $I_m$, that is,
\begin{equation}\label{bias=0}
	\mathrm{Bias}(\widehat{I}_m,\,I_m)=0,
	\quad m\geqslant 2.
\end{equation}
\end{theorem}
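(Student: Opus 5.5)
By Corollary~\ref{def-bias}, the bias is a fixed linear combination of the quantities $\Delta_{n,r}$, $1\leqslant r\leqslant m\leqslant n$. It therefore suffices to show that $\Delta_{n,r}=0$ for every $r\leqslant n$ when $X\sim\mathrm{Gamma}(\alpha,\lambda)$. Equivalently, we must show
\[
\mathbb{E}\!\left[\frac{X_{r:r}}{\overline{X}}\right]=\frac{\mathbb{E}[X_{r:r}]}{\mu}.
\]
Note that the indicator $\mathds{1}_{\{\sum X_i>0\}}$ in \eqref{estimator} equals one almost surely here, so it plays no role.

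The main tool is Lukacs' independence property of the gamma family. Set $S=\sum_{i=1}^n X_i$ and $D=(X_1/S,\ldots,X_n/S)$. Then $S\sim\mathrm{Gamma}(n\alpha,\lambda)$, $D\sim\mathrm{Dirichlet}(\alpha,\ldots,\alpha)$, and $D$ is independent of $S$.

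Since the maximum is positively homogeneous of degree one,
\[
X_{r:r}=\max\{X_1,\ldots,X_r\}=S\cdot\max\{X_1/S,\ldots,X_r/S\}=S\,M_r,
\]
where $M_r$ is a function of $D$ alone. Hence $X_{r:r}/\overline{X}=nM_r$.

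Independence of $S$ and $D$ then gives
\[
\mathbb{E}[X_{r:r}]=\mathbb{E}[S]\,\mathbb{E}[M_r]=n\mu\,\mathbb{E}[M_r],
\]
using $\mathbb{E}[S]=n\mu<\infty$ and $0\leqslant M_r\leqslant 1$, so both expectations are finite. Dividing by $\mu$ yields
\[
\frac{\mathbb{E}[X_{r:r}]}{\mu}=n\,\mathbb{E}[M_r]=\mathbb{E}\!\left[\frac{X_{r:r}}{\overline{X}}\right],
\]
that is, $\Delta_{n,r}=0$. Substituting this into Corollary~\ref{def-bias} gives \eqref{bias=0} for every $n\geqslant m$. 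A side benefit is that no moment or lower-tail condition is needed, unlike in Proposition~\ref{prop-main-1}: $X_{r:r}/\overline{X}\leqslant n$ is bounded.

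The only substantive ingredient is the independence of $S$ and $D$. I would either quote Lukacs' theorem or verify it quickly by the change of variables $(x_1,\ldots,x_n)\mapsto(s,d_1,\ldots,d_{n-1})$, whose Jacobian is $s^{n-1}$. Under this map the joint density factorizes as $s^{n\alpha-1}e^{-\lambda s}$ times $\prod d_i^{\alpha-1}$, which establishes the independence directly. The rest is bookkeeping.
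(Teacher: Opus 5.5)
Your proof is correct and is essentially the paper's own argument: you reduce via Corollary~\ref{def-bias} to showing $\Delta_{n,r}=0$, write $X_{r:r}=S\cdot\max\{X_1/S,\ldots,X_r/S\}$ by homogeneity of the maximum, and use the independence of the Dirichlet vector $(X_1/S,\ldots,X_n/S)$ from $S=\sum_{i=1}^n X_i$ to factor $\mathbb{E}[X_{r:r}]=n\mu\,\mathbb{E}[\max\{X_1/S,\ldots,X_r/S\}]=\mu\,\mathbb{E}[X_{r:r}/\overline{X}]$; the paper quotes this independence from Mosimann (1962), whereas you verify it by the Jacobian computation. Your extra remarks are correct and make the write-up slightly tidier than the paper's: the indicator in \eqref{estimator} equals one almost surely, and $X_{r:r}/\overline{X}\leqslant n$, so no integrability condition is needed.
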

}
\begin{proof}
It is well known \cite[see][Theorem~2.1]{Mosimann1962} that
\[
\left(
\frac{X_1}{\sum_{i=1}^n X_i},
\ldots,
\frac{X_n}{\sum_{i=1}^n X_i}
\right)
\sim \mathrm{Dirichlet}(\alpha,\ldots,\alpha),
\]
and that this random vector is independent of $\sum_{i=1}^n X_i$.

For $1\leqslant r\leqslant n$, define
\[
g(y_1,\ldots,y_n)=\max\{y_1,\ldots,y_r\}.
\]
Using the homogeneity of $g$ and the above independence, we obtain
\begin{align*}
	\mathbb{E}[X_{r:r}]
	=
	\mathbb{E}\!\left[g(X_1,\ldots,X_n)\right] 
	&=
	\mathbb{E}\!\left[
	g\!\left(
	\frac{X_1}{\sum_{i=1}^n X_i},
	\ldots,
	\frac{X_n}{\sum_{i=1}^n X_i}
	\right)
	\sum_{i=1}^n X_i
	\right] \\[0,2cm]
	&=
	\mathbb{E}\!\left[\sum_{i=1}^n X_i\right]
	\mathbb{E}\!\left[
	g\!\left(
	\frac{X_1}{\sum_{i=1}^n X_i},
	\ldots,
	\frac{X_n}{\sum_{i=1}^n X_i}
	\right)
	\right] \\[0,2cm]
	&=
	n\mu\,
	\mathbb{E}\!\left[
	g\!\left(
	\frac{X_1}{\sum_{i=1}^n X_i},
	\ldots,
	\frac{X_n}{\sum_{i=1}^n X_i}
	\right)
	\right]
	=
	\mu\,
	\mathbb{E}\!\left[
	\frac{X_{r:r}}{\overline{X}}
	\right].
\end{align*}

Therefore, the quantity $\Delta_{n,r}$ defined in \eqref{def-delta} satisfies
\[
\Delta_{n,r}=0,
\quad n\geqslant r.
\]
An application of Corollary~\ref{def-bias} then yields
\eqref{bias=0}.
\end{proof}

{
\begin{remark}
The gamma distribution plays a fundamental role in the unbiasedness result established in this paper. The proof relies on the independence between the vector of normalized independent gamma variables and their total sum, together with the homogeneity of the maximum function, which yields an exact cancellation of the bias. To the best of our knowledge, this independence structure is specific to the gamma family and does not hold for general distributions. Consequently, the proof presented here of the exact unbiasedness of \(\widehat{I}_m\) is specific to gamma populations.

Although related ideas may be considered for other distributional families, such as certain generalized gamma or Dirichlet-type models arising from gamma-based constructions, these typically do not preserve the independence between ratios and the total sum, and therefore do not directly extend the present result.
\end{remark}
}

%{\color{black}
	%We now proceed by applying Theorem \ref{main-theorem} to derive an explicit expression for the expectation of the estimator $\widehat{IG}_m(j,k)$ in gamma populations.}
%
%\begin{corollary}\label{corollary-main}
%	Let $X_1, X_2,\ldots, X_m$ be independent copies of $X\sim\text{Gamma}(\alpha,\lambda)$. For any $1\leqslant {\color{black}j < k}\leqslant m$, we have:
%	\begin{align*}
	%		\mathbb{E}[\widehat{IG}_m(j,k)]
	%		&=
	%		{1\over \alpha m}
	%		%
	%		\sum_{r=k}^{m}
	%		(-1)^{r-k}
	%		\binom{r-1}{k-1}
	%		\binom{m}{r}
	%		\int_0^\infty 
	%		\left\{
	%		1
	%		-
	%		{\gamma^r(\alpha,v)\over\Gamma^r(\alpha)}
	%		\right\}
	%		{\rm d}v  \,
	%		\nonumber
	%		\\[0,2cm]
	%		&-
	%		{1\over \alpha m}
	%		%
	%		\sum_{s=j}^{m}
	%		(-1)^{s-j}
	%		\binom{s-1}{j-1}
	%		\binom{m}{s}
	%		\int_0^\infty 
	%		\left\{
	%		1
	%		-
	%		{\gamma^s(\alpha,v)\over\Gamma^s(\alpha)}
	%		\right\}
	%		{\rm d}v
	%		=
	%		{IG}_m(j,k),
	%	\end{align*}
%	where ${IG}_m(j,k)$ is the {\color{black}extended $m$th Gini index} given in Proposition \ref{ext-gini-index}. Thus, the estimator $	\widehat{IG}_m(j,k)$ is unbiased
%	for gamma populations.	
%\end{corollary}

\begin{remark}
	Observe that \eqref{bias=0} extends the earlier results of \cite{Deltas2003}, \cite{Baydil2025}, \cite{Vila2025}, and \cite{Vila2026}. Furthermore, since the index $I_m$
	yields particular cases of the $S$-Gini index (see Remark \ref{rem-1}), Equation \eqref{bias=0} implies that, for these cases, the $S$-Gini estimator is unbiased for gamma populations, a result that, to the best of our knowledge, has not been previously reported in the literature.
\end{remark}

%\begin{remark}
%	Due to the scale invariance of the extended Gini index estimator $\widehat{IG}_m(j,k)$, its expectation $\mathbb{E}[\widehat{IG}_m(j,k)]$ is independent of the rate parameter $\lambda$, as asserted in  Corollary  \ref{corollary-main}.
%\end{remark}

%Setting $j=1$ and $k=m$ into Corollary \ref{corollary-main}, from Newton's binomial Theorem it follows that
%\begin{proposition}\label{prop-main}
%	Let $X_1, X_2,\ldots, X_m$ be independent copies of $X\sim\text{Gamma}(\alpha,\lambda)$. We have:
%	\begin{align*}
	%		\mathbb{E}[\widehat{IG}_m]
	%		\equiv
	%		\mathbb{E}[\widehat{IG}_m(1,m)]
	%		&=
	%		{1\over \alpha m}
	%		\left[
	%		\int_0^\infty 
	%		\left\{
	%		1
	%		-
	%		{\gamma^m(\alpha,v)\over\Gamma^m(\alpha)}
	%		\right\}
	%		{\rm d}v  \,
	%		\nonumber
	%		-
	%		%
	%		\int_0^\infty 
	%		\left\{
	%		1
	%		-
	%		{\gamma(\alpha,v)\over\Gamma(\alpha)}
	%		\right\}^m
	%		{\rm d}v
	%		\right]
	%		\\[0,2cm]
	%		&=
	%		{IG}_m(1,m)
	%		\equiv 
	%		IG_m,
	%	\end{align*}
%	where $IG_m$ is the $m$th Gini index given in Remark \ref{rem-i} and $\widehat{IG}_m$ is its  corresponding estimator (initially introduced by \cite{Vila2025}) given in  Remark \ref{rem-gini-index}.
%\end{proposition}
%
%
%\begin{remark}
%	Note that the result of Proposition \ref{prop-main} was previously established in \cite{Vila2025}.
%\end{remark}

\section{Monte Carlo simulation study}\label{simulation study}

{
This section presents Monte Carlo simulation results assessing the finite-sample performance of the linear order-statistic inequality index estimator $\widehat I_m$ defined in \eqref{estimator}. The simulations are designed to complement the theoretical results of Sections \ref{general_bias}--\ref{Unbiasedness_gamma} by illustrating the bias and mean squared error behavior of the estimator under gamma and non-gamma populations. To place the proposed nonparametric estimator in context, we also include, for each distributional scenario, a parametric plug-in competitor $\widetilde{I}_5^{\mathrm{MLE}}$ obtained by fitting the correct distributional family to each sample by maximum likelihood and then computing $I_5$ via quadrature from the fitted distribution.

{Throughout this section, we fix $m=5$ and set the weight vector $(a_1,\ldots,a_5)=(-1,0,0,0,1)$, which corresponds to the 5th Gini index $IG_5$ defined in Table~\ref{Table-1}. This choice yields the corresponding population index
\[
I_5
=
\frac{\mathbb{E}[X_{5:5}]-\mathbb{E}[X_{1:5}]}{5\mu},
\]
where $\mu = \mathbb{E}[X]$ and $X_{1:5}\leqslant \cdots \leqslant X_{5:5}$ denote the order statistics of an i.i.d.\ sample of size $5$ from a non-negative distribution. For each data-generating process, the true population value $I_5^{\mathrm{true}}$ is computed via numerical integration using the integral representation (see the Integral representation property in Section~\ref{Basic properties}):
\begin{align}\label{eq:Im-quadrature}
I_5^{\mathrm{true}}
=
\frac{1}{\mu}\int_0^1 w_5(u)\,Q_X(u)\,\mathrm{d}u,
\quad
w_5(u) = u^4-(1-u)^4,
\end{align}
where $Q_X$ denotes the quantile function of $X$. This integral is evaluated by Gauss--Legendre quadrature (using \texttt{integrate} in \textsf{R}), which yields numerically stable benchmark values without Monte Carlo error.}

The simulations are carried out for sample sizes $n\in\{10,20,30,50\}$. For each distribution and each $n$, the expectations of both estimators are approximated using $R_{\mathrm{MC}}=2{,}000$ Monte Carlo replications. {Within each replication, $\widehat{I}_5$ is computed exactly using the L-statistic representation \eqref{Ihat-L-exact}, which requires only sorting the sample and evaluating the combinatorial weights $\binom{r-1}{s-1}\binom{n-r}{m-s}/\binom{n-1}{m-1}$ for $s\in\{1,5\}$.} The MLE plug-in $\widetilde{I}_5^{\mathrm{MLE}}$ is obtained within each replication by fitting the distribution parameters by maximum likelihood and computing $I_5$ via quadrature from the fitted quantile function. Specifically, for gamma populations the shape and rate parameters are estimated using the standard MLE; for lognormal populations the MLE is closed-form ($\hat{\mu}_{\log}=n^{-1}\sum\log x_i$, $\hat{\sigma}_{\log}=\sqrt{n^{-1}\sum(\log x_i-\hat{\mu}_{\log})^2}$); for Weibull populations the shape and scale are estimated by maximum likelihood; and for Lomax populations with scale fixed at unity the shape MLE is $\hat{\alpha}=n\big/\sum_{i=1}^{n}\log(1+x_i)$, which has a closed form. {The true benchmark $I_5^{\mathrm{true}}$ is obtained by quadrature as described in~\eqref{eq:Im-quadrature}.}

For each $(\text{distribution},n)$ pair, we report for both $\widehat I_5$ and $\widetilde{I}_5^{\mathrm{MLE}}$:
\begin{itemize}
	\item the bias $\mathbb{E}[\widehat I_5]-I_5^{\mathrm{true}}$ (resp.\ $\mathbb{E}[\widetilde{I}_5^{\mathrm{MLE}}]-I_5^{\mathrm{true}}$),
	\item the root mean squared error
	\[
	\mathrm{RMSE}(\widehat I_5)
	=
	\left\{
	\mathbb{E}\bigl[(\widehat I_5-I_5^{\mathrm{true}})^2\bigr]
	\right\}^{1/2},
	\quad
	\mathrm{RMSE}(\widetilde{I}_5^{\mathrm{MLE}})
	=
	\left\{
	\mathbb{E}\bigl[(\widetilde{I}_5^{\mathrm{MLE}}-I_5^{\mathrm{true}})^2\bigr]
	\right\}^{1/2}.
	\]
\end{itemize}

The following positive-support distributions are considered:
\begin{enumerate}
	\item {Gamma$(\alpha,1)$}: {four shape parameters are examined, $\alpha\in\{0.5,\,2,\,5,\,10\}$,} with unit rate. In all cases, the mean is $\mu=\alpha$. {This range covers highly skewed ($\alpha=0.5$), moderately skewed ($\alpha=2$), mildly skewed ($\alpha=5$), and near-symmetric ($\alpha=10$) shapes, in order to examine the unbiasedness result across varying degrees of skewness within the gamma family.}
	\item {Lognormal$(0,\sigma)$}: {with scale parameters $\sigma\in\{0.5,\,1\}$, yielding $\mu=\exp(\sigma^2/2)$. The case $\sigma=0.5$ represents moderate skewness, whereas $\sigma=1$ corresponds to heavier tails.}
	\item {Weibull$(k,1)$}: {with shape $k\in\{0.8,\,1.6\}$ and scale $\lambda=1$,} for which $\mu=\Gamma(1+1/k)$. {The shape $k=0.8$ gives a heavy-tailed Weibull (decreasing hazard), while $k=1.6$ gives a light-tailed case (increasing hazard).}
	\item {Lomax$(\alpha,1)$}: {with shape $\alpha\in\{3,\,5\}$} and scale $\lambda=1$, implying $\mu=1/(\alpha-1)$. {Larger $\alpha$ corresponds to lighter tails.}
\end{enumerate}

These distributions were selected to represent {a wide range of tail behaviors and skewness levels:} light-tailed (gamma), moderately skewed (Weibull), and heavy-tailed (lognormal and Lomax) cases. {In particular, the multiple gamma scenarios allow direct verification that unbiasedness holds for all shape parameter values, while the non-gamma scenarios illustrate how tail heaviness amplifies finite-sample bias.}

Table~\ref{tab:mc_results} reports the Monte Carlo results. {The true population values $I_5^{\mathrm{true}}$ reported in the table are obtained by numerical quadrature via~\eqref{eq:Im-quadrature}.} The simulation results provide numerical confirmation of the theoretical findings established in Sections~\ref{Asymptotic unbiasedness} and~\ref{Unbiasedness_gamma}, and reveal important differences between the proposed U-statistic estimator and the MLE plug-in. For {all four gamma scenarios}, the empirical bias of $\widehat I_{\color{red}5}$ fluctuates tightly around zero and may change sign across $n$, which is expected due to the variance of the Monte Carlo bias estimates. {This behavior is consistent across $\alpha\in\{0.5,2,5,10\}$, suggesting that exact unbiasedness holds regardless of the gamma shape parameter.} In sharp contrast, the MLE plug-in $\widetilde{I}_5^{\mathrm{MLE}}$ exhibits systematic negative bias under all four gamma scenarios, ranging from approximately $-0.027$ at $n=10$ to $-0.003$ at $n=50$. In terms of RMSE, the two estimators are broadly comparable under gamma populations, with the MLE plug-in achieving slightly lower values due to the gain in efficiency from exploiting the parametric form.

From Table~\ref{tab:mc_results}, we observe that the U-statistic estimator exhibits increased finite-sample bias under non-gamma distributions. For the Lognormal$(0,1)$ and Lomax$(3,1)$ cases, the bias is negative and substantial for small $n$. Although the magnitude of the bias decreases as $n$ increases, it remains clearly distinguishable from zero even at $n=50$, particularly for the heavy-tailed Lomax distribution. {Comparing Lognormal$(0,0.5)$ with Lognormal$(0,1)$ illustrates that larger $\sigma$ amplifies the bias, as expected from the heavier right tail. Similarly, the Weibull$(0.8,1)$ case exhibits more pronounced bias than the Weibull$(1.6,1)$ case due to its heavier tail.} The Weibull$(1.6,1)$ distribution presents an intermediate behavior, with relatively small bias that fluctuates in sign across sample sizes.

Comparing the two estimators under non-gamma scenarios, the MLE plug-in $\widetilde{I}_5^{\mathrm{MLE}}$ has somewhat larger negative bias than $\widehat{I}_5$ at small $n$ for lognormal and Weibull populations, but achieves substantially lower RMSE by leveraging the correct parametric form. The improvement in RMSE is most dramatic for Lomax populations: at $n=10$, RMSE drops from $0.1224$ to $0.0445$ for Lomax$(3,1)$ and from $0.1036$ to $0.0209$ for Lomax$(5,1)$, reductions of approximately $64\%$ and $80\%$ respectively. This reflects the efficiency of the closed-form Lomax MLE $\hat{\alpha}=n/\sum\log(1+x_i)$. For lognormal and Weibull populations the RMSE gains are more modest (roughly $10$--$25\%$ at $n=10$) and diminish as $n$ grows. These results confirm that, while the MLE plug-in is more efficient when the parametric family is correctly specified, the proposed U-statistic estimator is robust to distributional misspecification, distribution-free, and exactly unbiased under the gamma family, properties the parametric plug-in does not share.

{
\medskip\noindent\textit{Validation of the bias decomposition and connection to Proposition~\ref{prop-main}.}
The Monte Carlo biases reported in Table~\ref{tab:mc_results} can be decomposed through Corollary~\ref{def-bias} into contributions from the individual factors $\Delta_{n,r}$. For the weight vector $(-1,0,0,0,1)$, the bias coefficients are $c_1=-1$, $c_2=2$, $c_3=-2$, $c_4=1$, $c_5=0$, so only the first four factors contribute:
\[
\mathrm{Bias}(\widehat{I}_5,I_5)
=-\Delta_{n,1}+2\Delta_{n,2}-2\Delta_{n,3}+\Delta_{n,4}.
\]
We estimated each $\Delta_{n,r}$ ($r=1,\ldots,4$) for the Lognormal$(0,1)$ and Lomax$(3,1)$ scenarios using an independent large auxiliary Monte Carlo simulation with $R=300{,}000$ replications. Table~\ref{tab:delta_validation} reports the estimated components and the resulting theoretical bias alongside the direct Monte Carlo bias from Table~\ref{tab:mc_results}. Several patterns are apparent. First, $\Delta_{n,1}$ is negligible across all cases (in absolute value below $0.003$), indicating that the minimum order statistic contributes almost no bias. Second, $\Delta_{n,2}$, $\Delta_{n,3}$, and $\Delta_{n,4}$ are all negative and increase in magnitude with $r$: upper order statistics are more sensitive to the random normalization by $\overline{X}$ than lower ones, and their contribution to the negative bias grows with rank. Third, the Lomax$(3,1)$ factors are substantially larger in magnitude than the Lognormal$(0,1)$ ones at the same $n$, consistent with the heavier upper tail of the Lomax distribution amplifying the normalization effect. Finally, the theoretical bias reconstructed from the decomposition closely tracks the direct Monte Carlo estimates throughout, confirming the accuracy of Corollary~\ref{def-bias}. Each factor $\Delta_{n,r}$ in Table~\ref{tab:delta_validation} is precisely the quantity that Proposition~\ref{prop-main} expresses as a double integral involving the Laplace transform $\mathscr{L}$ and the tilted CDF $F_z$. For any distribution with a numerically computable $\mathscr{L}$, the Proposition therefore provides a simulation-free route to these same values by two-dimensional quadrature, with no Monte Carlo noise. This dual computability, via direct Monte Carlo and via the Laplace integral, illustrates the practical role of Proposition~\ref{prop-main} as a tool for bias analysis under general non-negative populations.

%%The theoretical bias is $-\Delta_{n,1}+2\Delta_{n,2}-2\Delta_{n,3}+\Delta_{n,4}$; the MC bias is from %%Table~\ref{tab:mc_results} ($R_{\mathrm{MC}}=2{,}000$). Each $\Delta_{n,r}$ is also the value that %Proposition~\ref{prop-main} yields by numerical quadrature of the Laplace-transform integral.

%%The true value $I_5^{\mathrm{true}}$ is obtained by numerical quadrature via~\eqref{eq:Im-quadrature}. %%The MLE plug-in $\widetilde{I}_5^{\mathrm{MLE}}$ is computed by fitting the correct parametric family to %%5each sample by maximum likelihood and evaluating $I_5$ via quadrature from the fitted distribution.

\begin{table}[!ht]
	\centering
	\small
	\caption{Monte Carlo results for $\widehat I_5$ and $\widetilde{I}_5^{\mathrm{MLE}}$ ($m=5$, $(a_1,\ldots,a_5)=(-1,0,0,0,1)$): bias and RMSE based on $R_{\mathrm{MC}}=2{,}000$ replications.}
	\label{tab:mc_results}
	\begin{tabular}{lrr cc cc}
		\hline
		& & & \multicolumn{2}{c}{$\widehat{I}_5$ (U-statistic)} & \multicolumn{2}{c}{$\widetilde{I}_5^{\mathrm{MLE}}$ (MLE plug-in)} \\
		\cmidrule(lr){4-5}\cmidrule(lr){6-7}
		Distribution & $n$ & $I_5^{\mathrm{true}}$ & Bias & RMSE & Bias & RMSE \\
		\hline
		Gamma$(0.5,1)$ & 10 & 0.538 & $+0.0048$ & 0.1075 & $-0.0272$ & 0.0752 \\
		& 20 & 0.538 & $+0.0008$ & 0.0733 & $-0.0141$ & 0.0508 \\
		& 30 & 0.538 & $-0.0014$ & 0.0591 & $-0.0093$ & 0.0412 \\
		& 50 & 0.538 & $+0.0011$ & 0.0456 & $-0.0056$ & 0.0303 \\[0.5ex]
		Gamma$(2,1)$ & 10 & 0.311 & $+0.0007$ & 0.0713 & $-0.0219$ & 0.0628 \\
		& 20 & 0.311 & $+0.0005$ & 0.0476 & $-0.0102$ & 0.0420 \\
		& 30 & 0.311 & $+0.0003$ & 0.0383 & $-0.0070$ & 0.0344 \\
		& 50 & 0.311 & $-0.0003$ & 0.0293 & $-0.0040$ & 0.0261 \\[0.5ex]
		Gamma$(5,1)$ & 10 & 0.203 & $+0.0001$ & 0.0475 & $-0.0152$ & 0.0447 \\
		& 20 & 0.203 & $+0.0005$ & 0.0328 & $-0.0071$ & 0.0309 \\
		& 30 & 0.203 & $-0.0003$ & 0.0254 & $-0.0051$ & 0.0242 \\
		& 50 & 0.203 & $-0.0003$ & 0.0197 & $-0.0033$ & 0.0187 \\[0.5ex]
		Gamma$(10,1)$ & 10 & 0.145 & $+0.0003$ & 0.0339 & $-0.0108$ & 0.0323 \\
		& 20 & 0.145 & $-0.0009$ & 0.0236 & $-0.0065$ & 0.0231 \\
		& 30 & 0.145 & $+0.0001$ & 0.0187 & $-0.0036$ & 0.0182 \\
		& 50 & 0.145 & $-0.0003$ & 0.0146 & $-0.0025$ & 0.0142 \\[0.5ex]
		Lognormal$(0,0.5)$ & 10 & 0.230 & $-0.0051$ & 0.0576 & $-0.0195$ & 0.0532 \\
		& 20 & 0.230 & $-0.0030$ & 0.0400 & $-0.0097$ & 0.0367 \\
		& 30 & 0.230 & $-0.0018$ & 0.0328 & $-0.0063$ & 0.0291 \\
		& 50 & 0.230 & $+0.0004$ & 0.0254 & $-0.0025$ & 0.0224 \\[0.5ex]
		Lognormal$(0,1)$ & 10 & 0.447 & $-0.0272$ & 0.1127 & $-0.0379$ & 0.0971 \\
		& 20 & 0.447 & $-0.0156$ & 0.0840 & $-0.0194$ & 0.0663 \\
		& 30 & 0.447 & $-0.0088$ & 0.0700 & $-0.0099$ & 0.0540 \\
		& 50 & 0.447 & $-0.0074$ & 0.0563 & $-0.0063$ & 0.0409 \\[0.5ex]
		Weibull$(0.8,1)$ & 10 & 0.489 & $-0.0029$ & 0.1004 & $-0.0309$ & 0.0915 \\
		& 20 & 0.489 & $-0.0013$ & 0.0705 & $-0.0146$ & 0.0625 \\
		& 30 & 0.489 & $-0.0010$ & 0.0580 & $-0.0102$ & 0.0514 \\
		& 50 & 0.489 & $-0.0016$ & 0.0436 & $-0.0063$ & 0.0380 \\[0.5ex]
		Weibull$(1.6,1)$ & 10 & 0.289 & $+0.0036$ & 0.0639 & $-0.0199$ & 0.0621 \\
		& 20 & 0.289 & $+0.0012$ & 0.0441 & $-0.0104$ & 0.0431 \\
		& 30 & 0.289 & $+0.0019$ & 0.0355 & $-0.0060$ & 0.0346 \\
		& 50 & 0.289 & $+0.0004$ & 0.0261 & $-0.0044$ & 0.0256 \\[0.5ex]
		Lomax$(3,1)$ & 10 & 0.518 & $-0.0262$ & 0.1224 & $+0.0038$ & 0.0445 \\
		& 20 & 0.518 & $-0.0218$ & 0.0913 & $+0.0022$ & 0.0300 \\
		& 30 & 0.518 & $-0.0136$ & 0.0764 & $+0.0008$ & 0.0239 \\
		& 50 & 0.518 & $-0.0095$ & 0.0631 & $+0.0004$ & 0.0184 \\[0.5ex]
		Lomax$(5,1)$ & 10 & 0.472 & $-0.0129$ & 0.1036 & $+0.0014$ & 0.0209 \\
		& 20 & 0.472 & $-0.0097$ & 0.0771 & $+0.0006$ & 0.0146 \\
		& 30 & 0.472 & $-0.0067$ & 0.0648 & $+0.0005$ & 0.0115 \\
		& 50 & 0.472 & $-0.0022$ & 0.0478 & $+0.0000$ & 0.0092 \\
		\hline
	\end{tabular}
\end{table}

\begin{table}[!ht]
\centering
\small
\caption{Validation of the bias decomposition (Corollary~\ref{def-bias}) via Monte Carlo estimation of the $\Delta_{n,r}$ components ($R=300{,}000$ replications).}
\label{tab:delta_validation}
\begin{tabular}{lrrrrrrr}
\hline
Distribution & $n$ & $\Delta_{n,1}$ & $\Delta_{n,2}$ & $\Delta_{n,3}$ & $\Delta_{n,4}$ & Theory bias & MC bias \\
\hline
Lognormal$(0,1)$ & 10 & $+0.0010$ & $-0.0231$ & $-0.0475$ & $-0.0737$ & $-0.0259$ & $-0.0272$ \\
 & 20 & $+0.0010$ & $-0.0148$ & $-0.0277$ & $-0.0434$ & $-0.0188$ & $-0.0156$ \\
 & 30 & $+0.0025$ & $-0.0042$ & $-0.0153$ & $-0.0281$ & $-0.0085$ & $-0.0088$ \\
 & 50 & $+0.0017$ & $-0.0050$ & $-0.0106$ & $-0.0165$ & $-0.0070$ & $-0.0074$ \\[0.5ex]
Lomax$(3,1)$ & 10 & $+0.0026$ & $-0.0250$ & $-0.0561$ & $-0.0907$ & $-0.0312$ & $-0.0262$ \\
 & 20 & $+0.0002$ & $-0.0170$ & $-0.0341$ & $-0.0565$ & $-0.0224$ & $-0.0218$ \\
 & 30 & $+0.0016$ & $-0.0108$ & $-0.0270$ & $-0.0435$ & $-0.0127$ & $-0.0136$ \\
 & 50 & $-0.0026$ & $-0.0128$ & $-0.0237$ & $-0.0343$ & $-0.0099$ & $-0.0095$ \\
\hline
\end{tabular}
\end{table}
}

}

%\clearpage

{
\section{Empirical application}\label{empirical application}

In order to illustrate the practical use of the proposed estimator $\widehat{I}_5$, we analyze the distribution of GDP per capita across the Americas. The dataset comprises $n=34$ countries and territories in the Americas region in 2023. The raw data, measured in international dollars at 2021 PPP prices, were obtained from the Our World in Data repository \citep{OWID2024} and converted into units of USD$\times 10^{3}$. The final sample spans the full income spectrum of the region, ranging from low-income economies to high-income countries.

Table~\ref{tab:gdp_americas} lists the 34 countries together with their GDP per capita values sorted in ascending order. The dataset exhibits strong positive skewness and high variability, with GDP per capita ranging from USD\,2.956 thousand (Haiti) to USD\,74.578 thousand (United States). The coefficient of variation equals $0.623$, confirming substantial income dispersion across the region.

}
\begin{table}[!ht]
\centering
\caption{GDP per capita (USD $\times 10^{3}$, PPP constant 2021 prices) for 34 countries and territories in the Americas, 2023.}
\label{tab:gdp_americas}
\begin{tabular}{lr@{\qquad}lr}
\hline
Country & GDP & Country & GDP \\
\hline
Haiti                              &  2.956 & Barbados                    & 19.224 \\
Honduras                           &  6.468 & Mexico                      & 22.143 \\
Nicaragua                          &  7.487 & Dominican Republic          & 23.088 \\
Bolivia                            &  9.844 & Saint Lucia                 & 23.403 \\
Jamaica                            & 10.291 & Costa Rica                  & 26.293 \\
El Salvador                        & 11.404 & Argentina                   & 27.105 \\
Guatemala                          & 12.389 & Antigua and Barbuda         & 28.967 \\
Belize                             & 12.455 & Chile                       & 29.463 \\
Ecuador                            & 14.472 & Saint Kitts and Nevis       & 30.409 \\
Peru                               & 15.294 & Uruguay                     & 31.019 \\
Paraguay                           & 15.783 & Trinidad and Tobago         & 31.706 \\
Grenada                            & 16.946 & Bahamas                     & 33.106 \\
Dominica                           & 17.420 & Panama                      & 35.864 \\
St.\ Vincent \& the Grenadines     & 18.335 & Puerto Rico                 & 42.995 \\
Colombia                           & 18.692 & Guyana                      & 49.315 \\
Brazil                             & 19.018 & Canada                      & 55.919 \\
Suriname                           & 19.044 & United States               & 74.578 \\
\hline
\end{tabular}
\end{table}
{
We compute two inequality indices. First, the classical Gini coefficient $\widehat{G}=\widehat{I}_2$ using the unbiased L-statistic estimator \citep{Deltas2003}. Second, the proposed 5th Gini index estimator $\widehat{I}_5$ (with weights $(a_1,\ldots,a_5)=(-1,0,0,0,1)$), computed exactly via the L-statistic representation \eqref{Ihat-L-exact}. To quantify sampling uncertainty, $95\%$ bootstrap confidence intervals are obtained from $B=5{,}000$ resamples with replacement from the observed data. Table~\ref{tab:app_results} summarizes the results. Both estimators confirm a moderate-to-high level of income inequality across the Americas. The classical Gini coefficient $\widehat{G}=0.3286$ captures the overall level of pairwise dispersion in the distribution, while the 5th Gini index $\widehat{I}_5=0.2773$ measures dispersion between the expected minimum and maximum of a random quintet of countries, normalized by five times the mean. The relationship $\widehat{I}_5 < \widehat{G}$ is consistent with the theoretical ordering: $I_5$ only contrasts the two extreme order statistics in a subsample of size $5$, whereas the Gini coefficient integrates dispersion across all pairs.
}

\clearpage

\begin{table}[!ht]
\centering
\caption{Summary statistics and inequality estimates for GDP per capita in the Americas, 2023 ($n=34$). Bootstrap $95\%$ confidence intervals are based on $B=5{,}000$ resamples.}
\label{tab:app_results}
\begin{tabular}{lrl}
\hline
Statistic & Value & \\
\hline
\multicolumn{3}{l}{\textit{Descriptive statistics (USD $\times 10^3$)}} \\
\quad Sample size ($n$) & 34 & \\
\quad Mean ($\overline{X}$) & 23.909 & \\
\quad Median & 19.134 & \\
\quad Std.\ deviation & 14.897 & \\
\quad Minimum & 2.956 & (Haiti) \\
\quad Maximum & 74.578 & (United States) \\
\quad Coefficient of variation & 0.623 & \\[0.5ex]
\multicolumn{3}{l}{\textit{Inequality measures}} \\
\quad Gini ($\widehat{G}$) & 0.3286 & [0.2416,\; 0.3979] \\
\quad 5th Gini ($\widehat{I}_5$) & 0.2773 & [0.2000,\; 0.3332] \\
\hline
\end{tabular}
\end{table}

\section{Concluding remarks}\label{concluding remarks}
{
In this paper, we provide a unified bias analysis for a broad class of rank-based inequality measures defined through linear combinations of expected order statistics, encompassing the classical Gini coefficient, the $m$th Gini index, the extended $m$th Gini index, and particular cases of the $S$-Gini index (see Remark~\ref{rem-1}). The proposed framework connects naturally to spectral inequality measures and places the entire family within the axiomatically characterized class of Lorenz-consistent measures. We derive a general finite-sample bias decomposition for the canonical U-statistic-type estimator $\widehat{I}_m$ in terms of the factors $\Delta_{n,r}$, obtain an explicit Laplace-transform characterization of each $\Delta_{n,r}$ (Proposition~\ref{prop-main}) that serves as both a simulation-free computational tool and an analytical source of intuition for the asymptotic result, and establish asymptotic unbiasedness under mild moment and lower-tail conditions. The main contribution is the proof of exact unbiasedness under gamma populations, extending several existing results for Gini-type estimators \citep{Deltas2003,Baydil2025,Vila2025,Vila2026} via the Dirichlet property of normalized gamma samples. The Monte Carlo study confirms these findings and reveals, by comparison with a parametric MLE plug-in, that the correctly specified plug-in incurs systematic negative bias even under gamma populations, whereas $\widehat{I}_5$ remains exactly unbiased; under non-gamma populations the plug-in is more efficient but $\widehat{I}_5$ is distribution-free and robust to misspecification. An empirical illustration analyzes income inequality in GDP per capita across $34$ countries in the Americas. Extensions to other distributional families and the development of bias-corrected estimators for non-gamma populations are currently under investigation.}

%\clearpage
%\clearpage

\paragraph*{Acknowledgements}
The research was supported in part by CNPq and CAPES grants from the Brazilian government.

\paragraph*{Disclosure statement}
There are no conflicts of interest to disclose.

%%%%%%%%%%%%%%%%%%%%%%%%%%%%%%%%%%%%%%%%%%%%%%%%%%%%%%%%%%%%%

%%\bibliographystyle{apalike}
%%\bibliography{references}

%\begin{appendices}
%	\section{Proofs of the results in Section \ref{sec:02}}\label{Appendix A}
%
%\end{appendices}

%\bibliography{sn-bibliography}% common bib file
%%% if required, the content of .bbl file can be included here once bbl is generated
%%%\input sn-article.bbl

\end{document}